\documentclass[a4paper,11pt]{article}

\usepackage[top=1in,right=1in,left=1in,bottom=1in,a4paper]{geometry}

\usepackage{amsmath, amsthm, amssymb, amsfonts}

\usepackage{enumerate}
\usepackage[all]{xy}


\newdir{ >}{!/-8.5pt/@{>}}

\newtheoremstyle{theoremstyle}
  {10pt}      
  {5pt}       
  {\itshape}  
  {}          
  {\bfseries} 
  {.}         
  {.5em}      
  {}          

\newtheoremstyle{examplestyle}
  {10pt}      
  {5pt}       
  {}          
  {}          
  {\bfseries} 
  {.}         
  {.5em}      
  {}          

\newtheoremstyle{sub-style}
  {10pt}      
  {5pt}       
  {}  
  {}          
  {\bfseries} 
  {.}         
  {.5em}      
  {}          

\theoremstyle{theoremstyle}
\newtheorem{theorem}{Theorem}[section]
\newtheorem*{theorem*}{Theorem}
\newtheorem{lemma}[theorem]{Lemma}
\newtheorem*{lemma*}{Lemma}
\newtheorem{proposition}[theorem]{Proposition}
\newtheorem*{proposition*}{Proposition}
\newtheorem{corollary}[theorem]{Corollary}
\newtheorem*{corollary*}{Corollary}

\theoremstyle{examplestyle}
\newtheorem{example}[theorem]{Example}

\newtheorem{definition}[theorem]{Definition}
\newtheorem{definition*}{Definition}
\newtheorem{remark}[theorem]{Remark}
\newtheorem{remark*}{Remark}

\theoremstyle{sub-style}
\newtheorem{sub}[theorem]{}

\newcommand{\comment}[1]{}

\newcommand{\sh}[1]{\mathcal{#1}}

\newcommand{\spec}[1]{\operatorname{Spec}(#1)}

\newcommand{\Hom}{\operatorname{Hom}}
\newcommand{\HOM}{\operatorname{HOM}}

\newcommand{\ksm}{{\K[\sigma_M]}}

\newcommand{\goodquot}{/\negmedspace/}
\newcommand{\ilim}{\varprojlim}
\newcommand{\colim}{\varinjlim}

\newcommand{\E}{\mathbf{E}}
\newcommand{\K}{K}
\newcommand{\Z}{\mathbb{Z}}

\newcommand{\R}{\mathbb{R}}
\newcommand{\N}{\mathbb{N}}

\newcommand{\uc}{{\underline{c}}}

\newcommand{\mksmMod}{\text{$M$-$\ksm$-$\operatorname{Mod}$}}

\title{A lifting functor for toric sheaves}


\date{August 2012}

\author{Markus Perling\footnote{Fakult\"at f\"ur Mathematik Ruhr-Universit\"at Bochum
Universit\"atsstra\ss e 150 44780 Bochum, Germany, {\tt Markus.Perling@rub.de}}}

\begin{document}

\maketitle

\begin{abstract}
For a variety $X$ which admits a Cox ring, we introduce a functor from the category
of quasi-coherent sheaves on $X$ to the category of graded modules over the homogeneous
coordinate
ring of $X$. We show that this functor is right adjoint to the sheafification functor and therefore
left-exact. Moreover, we show that this functor preserves torsion-freeness and reflexivity.
For the case of toric sheaves, we give a combinatorial characterization of its right derived functors
in terms of certain right derived limit functors.
\end{abstract}


\section{Introduction}

Consider an affine normal variety $W = \spec{S}$ over an algebraically closed
field $\K$, $G$ a diagonalizable group scheme which acts on $W$, and
$H \subseteq G$ a closed subgroup scheme. We denote $T$ the quotient of
diagonalizable groups schemes $G / H$. Moreover, we assume the following.
\begin{itemize}
\item There exists a Zariski-open $G$-invariant subset $\hat{X}$ of $W$
such that a good quotient $X = \hat{X} \goodquot H$ exists. We denote
$\pi: \hat{X} \rightarrow X$ the corresponding projection.
\item $X$ admits an affine $T$-invariant open covering (this is
automatic if $T$ is a torus, see \cite{sum1}).
\item The complement $Z = W \setminus \hat{X}$ has codimension at least $2$.
\end{itemize}
The actions of $G$ and $H$ on $W$ induce gradings on $S$ by the character groups
$X(G)$ and $X(H)$,
respectively, which are compatible via the surjection $X(G) \twoheadrightarrow
X(H)$. With this, we require moreover the following.
\begin{itemize}
\item $X(H) \cong A_{d - 1}(X)$, the divisor class group, and for suitable
representatives $D_\chi \in A_{d - 1}$  there exists an isomorphism of
$S_0$-modules
$S \cong \bigoplus_{\chi \in X(H)} \Gamma\big(\sh{O}(D_\chi)\big)$ which
is compatible with the $X(H)$-grading of $S$. In particular, the latter carries an
induced ring structure.
 \end{itemize}
These conditions essentially imply that $X$ is a variety which admits a Cox ring,
where we admit possibly some further action by a diagonalizable group scheme.
In particular, this class of varieties includes the Mori dream spaces. The main
application we have in mind is the case where $T$ is a torus and $X$ a toric variety
such that $S$ is the associated homogeneous
coordinate ring as defined in \cite{Cox}.
It was shown in \cite[\S 6]{PerlingTrautmann} that by taking
local invariants we obtain an exact and essentially surjective functor which
maps an $X(G)$-graded $S$-module $E$ to a $T$-equivariant quasi-coherent
sheaf $\widetilde{E}$ on $X$,
the so-called {\em sheafification functor}. Conversely,
there is a functor from the category of $T$-equivariant sheaves on $X$
to the category of $X(G)$-graded $S$-modules, mapping a quasi-coherent
sheaf $\sh{E}$ to a $X(G)$-graded $S$-module $\Gamma_* \sh{E} :=
\Gamma(\hat{X}, \pi^*\sh{E})$. This functor is right inverse to the
sheafification functor, i.e., we have $\widetilde{\Gamma_* \sh{E}}�\cong
\sh{E}$ for any $\sh{E}$. However, the functor $\Gamma_*$ in many
cases is not very well behaved. So it usually does not preserve properties
such as torsion-freeness and reflexivity. Also, by being the composition of
the right-exact functor $\pi^*$ with the left-exact global section
functor, $\Gamma_*$ does not have any exactness properties. In general,
$\Gamma_*$ is right-exact if $\hat{X} = W$ (and thus $X$ is affine)
and it is left-exact if $\pi$ is a flat morphism.

The aim of this note is to construct an alternative functor to $\Gamma_*$,
which we are going to call the {\em lifting functor}, which maps a quasi-coherent
$T$-equivariant sheaf $\sh{E}$ to an $X(G)$-graded $S$-module $\widehat{\sh{E}}$.
We will show that the lifting functor has the following two general properties:
\begin{enumerate}
\item The lifting functor is right adjoint the sheafification functor and therefore left-exact
(Theorem \ref{adjointness}).
\item Lifting preserves torsion-freeness and reflexivity. For torsion free sheaves it preserves
coherence (Theorem \ref{coherencetheorem1}).
\end{enumerate}

The lifting functor is an offspring of recent work on toric sheaves \cite{perling7}. Assume that
$X$ is a toric variety and $\hat{X}$ the standard quotient presentation as in \cite{Cox}. By
results of Klyachko \cite{Kly90}, \cite{Kly91}, any coherent reflexive $T$-equivariant sheaf
$\sh{E}$ can be described by a finite-dimensional vector space together with a family of
filtrations parameterized by the rays of the fan associated to $X$. In order to represent $\sh{E}$
by an appropriate $\Z^n$-graded module over the homogeneous coordinate ring, it is a
rather straightforward observation that, rather than taking $\Gamma_* \sh{E}$, we can
choose a reflexive sheaf which is associated to precisely the same filtrations as $\sh{E}$
(this is possible because there is a one-to-one correspondence among the rays of the fans associated
to $X$ and $\hat{X}$, respectively).
Our results show that this ad-hoc observation indeed has a functorial interpretation.
In Section \ref{toricsection} we will see that the lifting functor has moreover a very nice interpretation
in the combinatorial setting of \cite{perling7}.

\section{Preliminaries}

\begin{sub}
Let $A$ be any abelian group, $S$ a $A$-graded $\K$-algebra, and $E$ an $A$-graded $S$-module.
Then $E \cong \bigoplus_{\alpha \in A} E_\alpha$ and for any $\beta \in A$ we denote
$E(\beta) = \bigoplus_{\alpha \in A} E_{\alpha + \beta}$ the {\em degree shift} of $E$ by $\beta$.
\end{sub}

\begin{sub}\label{tensorproduct}
For any two $S$-modules $E$ and $F$,
The tensor product $E \otimes_R F$ can be $A$-graded as follows. Consider first the
$\K$-vector space $E \otimes_\K F$ and set $(E \otimes_\K F)_\alpha
= \bigoplus_{\beta \in A} (E_\beta \otimes_\K F_{\alpha - \beta})$.
Then for $\alpha \in A$ we form $(E \otimes_R F)_\alpha$ as the quotient
of $(E \otimes_\K F)_\alpha$ by the subvector space generated
by $re \otimes f - e \otimes rf$ for $e \in E, f \in F, r \in R$. Note that
$E(\alpha) \otimes_R F \cong E \otimes_R F(\alpha) \cong (E \otimes_R F)(\alpha)$.
\end{sub}

\begin{sub}\label{HOM}
For any $A$-graded $S$-modules $E$, $F$,
the {\em graded} version of $\Hom_S(E, F)$ by definition is given by
\begin{equation*}
\HOM^A_S(E, F) := \bigoplus_{\alpha \in A} \Hom^A_S(E, F(\alpha)),
\end{equation*}
where $\Hom^A_S(E, F(\alpha)) = \{f \in \Hom_S(E, F) \mid
f(E_\beta) \subseteq F_{\beta + \alpha}$ for every $\beta \in A\}$.
We can consider in a natural way $\HOM^A_S(E, F)$
as a subset of $\Hom_R(E, F)$. Moreover, within the graded setting,
$\HOM^A_S$ satisfies the same general functorial properties as the standard
$\Hom$ (see \cite[\S 2]{NO2}).
Note that when we speak of the category of $A$-graded modules, then the set
of morphisms between modules $E$ and $F$ is given by $\Hom^A_S(E, F)$ and
{\em not} by $\HOM^A_S(E, F)$.
\end{sub}

\begin{sub}
We will deal with three gradings, given by the character groups $X(T)$,
$X(G)$, and $X(H)$, respectively. Any given $X(G)$-graded ring $S$
carries an $X(H)$-grading as well via the surjection $X(G) \twoheadrightarrow X(H)$.
To distinguish between these two gradings, we write the homogeneous components
$S_{(\alpha)}$ and $S_{\chi}$ for the $X(H)$- and the $X(G)$-grading, respectively,
where $\alpha \in X(H)$ and $\chi \in X(G)$.
For $\chi \in X(G)$ we may also write $S_{(\chi)}$ for the $X(H)$-homogeneous
component determined by the image of $\chi$ in $X(H)$. Then $S_{(\chi)}$ has a
natural $X(T)$-grading which is given by $S_{(\chi)} \cong \bigoplus_{\eta \in X(T)}
(S_{(\chi)})_\eta$ with $(S_{(\chi)})_\eta = S_{\chi + \eta}$. We use the same
conventions for $X(G)$- and $X(H)$-graded $S$-modules.
\end{sub}

\begin{sub}\label{gradedinclusions}
For any $X(G)$-graded $S$-modules $E, F$, we have the two graded modules
$\HOM^{X(G)}_S(F, E)$ and $\HOM^{X(H)}_S(F, E)$,
together with the natural sequence of inclusions
\begin{equation*}
\HOM^{X(G)}_S(F, E) \subseteq \HOM^{X(H)}_S(F, E) \subseteq \HOM_S(F, E)
\end{equation*}
(which even satisfy certain topological properties, \cite[\S 2.4]{NO2}).
\end{sub}

\begin{sub}\label{extendedgrading}
The $X(H)$-invariant subring $R = S_{(0)}$ is automatically $X(T)$-graded. It is
also $X(G)$-graded by trivial extension, i.e., we set $R_\chi = 0$ for every $\chi \in
X(G) \setminus X(T)$. Likewise, every $X(T)$-graded $R$-module can be given an
$X(G)$-grading.
\end{sub}

\begin{sub}\label{signconvention}
With the notation as in \ref{HOM}, note that we have
$\HOM^A_S(F, E) = \bigoplus_{\alpha \in A} \Hom^A_S(F, E(\alpha)) =$ $\bigoplus_{\alpha \in A}
\Hom^A_S(F(-\alpha), E))$. However, in order to avoid some cumbersome signs, we will usually
write expressions like
$\widehat{E} = \bigoplus_{\alpha \in A} \Hom^A_S(S(\alpha), E)$, where it is understood that
the proper grading is given by $(\widehat{E})_\alpha = \Hom^A_S(S(-\alpha), E)$.
\end{sub}

\begin{sub}\label{cover}
The sheafification functor as defined in \cite{PerlingTrautmann} maps an $X(H)$-graded
(respectively $X(G)$-graded) $S$-module $E$ to a quasi-coherent sheaf $\widetilde{E}$ over
$X$ as follows. Let open affine covers $\{U_i = \spec{R_i}\}_{i \in I}$ and
$\{\hat{U}_i = \pi^{-1}(U_i) = \spec{S_i}\}_{i \in I}$ on
$X$ and $\hat{X}$, respectively, be given, such that $U_i = \hat{U}_i \goodquot H$ (both covers
can be chosen $T$ and $G$-invariant, respectively). Then $R_i = S_i^H = (S_i)_{(0)}$ for every $i \in I$
and we can associate to every $U_i$ the $R_i$-module $\Gamma(\hat{U}_i, E)_{(0)}$, where
by abuse of notation we identify $E$ with its associated quasi-coherent sheaf over $W$.
These glue naturally to give a quasi-coherent sheaf of $\sh{O}_X$-modules. Moreover, if
the $U_i$ are choosen $T$-invariant, then the $R_i$ are $X(T)$-graded, and
both the $R_i$ and $S_i$ are $X(G)$-graded by \ref{extendedgrading}. In this case,
$\sh{E}$ has also an induced $T$-equivariant structure.
\end{sub}

\section{The right adjoint}\label{rightadjointsection}

For a given morphism of schemes $f : U \rightarrow V$ and a quasi-coherent
sheaf $\sh{F}$ on $V$, it is standard to define the pullback $f^*\sh{F}$
as $f^{-1}\sh{F}
\otimes_{f^{-1} \sh{O}_V} \sh{O}_U$. This defines a right-exact functor
from the category of (quasi-)coherent $\sh{O}_V$-modules to the category
of (quasi-)coherent $\sh{O}_U$-modules. However, this is not the only conceivable
way to define a pull-back functor; instead, one could consider the sheaf
\begin{equation*}
f\hat{\ } \sh{F} := \mathcal{H}om_{f^{-1}\sh{O}_V}(\sh{O}_U, f^{-1}\sh{F}).
\end{equation*}
Clearly, $f\hat{\ }$ is a left-exact functor from the category
of quasi-coherent $\sh{O}_V$-modules to the category of quasi-coherent
$\sh{O}_U$-modules. In the affine case, i.e., $U = \spec{A}$,
$V = \spec{B}$ for some commutative rings $A$, $B$, and $\sh{F}$ the
sheaf corresponding to an $B$-module
$F$, $f\hat{\ }\sh{F}$ corresponds to the module $\Hom_B(A, F)$, where
the $A$-module structure is given by $(rg)(r') = g(rr')$ for $r, r' \in R$
and $g \in \Hom_B(A, F)$.

However, the following example shows that $f\hat{\ }$ in general
will behave quite pathological.

\begin{example}
Assume $R = F = \K$ and $T = \K[x]$. Then we have isomorphisms of
$\K$-vector spaces
\begin{equation*}
\Hom_\K(\K[x], \K) \cong \Hom_\K(\bigoplus_{i \geq 0}�\K, \K)
\cong \prod_{i \geq 0} \Hom_\K(\K, \K) \cong \prod_{i�\geq 0} \K.
\end{equation*}
That is, from a one-dimensional $\K$-vector space we have created a
$\K[x]$-module which has torsion elements and no countable generating set.
\end{example}

We will see that one can define a better behaved
graded version of this pull-back.
Under our general assumptions on $X$ and $\hat{X}$,
let $\{U_i\}_{i \in I}$ and $\{\hat{U}_i = \pi^{-1}(U_i)\}_{i \in I}$
be affine $T$- and $G$-invariant covers, respectively, as in \ref{cover}.
By the general properties of good quotients, the $\hat{U}_i$ form an
affine open covering of $\hat{X}$ such that $U_i = \hat{U}_i \goodquot H$ for every
$i \in I$. We denote $U_i = \spec{R_i}$ and $\hat{U}_i = \spec{S_i}$; then
$R_i = S_i^H$ for every $i \in I$. Moreover, both the $R_i$ and $S_i$ are
$X(G)$-graded by \ref{extendedgrading}.
For any character (and thus divisor class of $X$) $\alpha \in X(H)$, there is naturally
associated the module $\sh{O}(\alpha) \cong \widetilde{S(\alpha)}$, which is reflexive and
of rank one. This module is a  distinguished representative for the isomorphism class of
such sheaves associated to the class $\alpha$. Similarly, if we choose some $\chi \in X(G)$
which maps to $\alpha$ via the
surjection $X(G) \twoheadrightarrow X(H)$, we obtain an induced $T$-equivariant structure
on $\sh{O}(\alpha)$, which we denote by $\sh{O}(\chi) \cong \widetilde{S(\chi)}$.

\begin{definition}
Let $\sh{E}$ be a $T$-equivariant quasi-coherent sheaf on $X$. Then we set
\begin{equation*}
\sh{E}^H := \bigoplus_{\alpha \in X(H)} \mathcal{H}om_{\sh{O}_X}(\sh{O}(\alpha), \sh{E}).
\end{equation*}
and
\begin{equation*}
\sh{E}^G := \bigoplus_{\chi \in X(G)} \mathcal{H}om_{\sh{O}_X}^T(\sh{O}(\chi), \sh{E}).
\end{equation*}
\end{definition}

We first show the following.

\begin{proposition}\label{firstprop}
Let $\sh{E}$ be a $T$-equivariant quasi-coherent sheaf on $X$.
\begin{enumerate}[(i)]
\item\label{firstpropi} Both $\sh{E}^H$ and $\sh{E}^G$ are quasi-coherent subsheaves
of $\pi\hat{\ }\sh{E}$, and
$\sh{E}^H \cong \sh{E}^G$ as $\sh{O}_{\hat{X}}$-modules.
\item\label{firstpropiii} $\sh{O}_X^H$ (and therefore $\sh{O}_X^G$) is isomorphic to
$\sh{O}_{\hat{X}}$.
\item\label{firstpropii} If $\Gamma(U_i, \sh{E})$ is a first syzygy module for every $i$,
then so is $\Gamma(\hat{U}_i, \sh{E}^H)$.
\item\label{firstpropiv} If $\sh{E}$ is coherent and torsion free, then
$\sh{E}^H$ and $\sh{E}^G$ are coherent and torsion free as well.
\end{enumerate}
\end{proposition}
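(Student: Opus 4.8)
The plan is to reduce everything to a local computation on the affine charts $\hat U_i = \spec{S_i}$ of $\hat X$ and then feed in two finiteness/duality inputs about the isotypic components $(S_i)_{(\alpha)}$ over the invariant ring $R_i = S_i^H$: that each is a finitely generated reflexive $R_i$-module, and that multiplication induces a perfect pairing between the components of opposite class. Writing $E_i := \Gamma(U_i,\sh{E})$ and recalling from \ref{cover} that $\sh{O}(\alpha)|_{U_i}$ corresponds to the $R_i$-module $(S_i)_{(\alpha)}$, the sheaf $\shHom_{\sh{O}_X}(\sh{O}(\alpha),\sh{E})$ restricts to $\Hom_{R_i}((S_i)_{(\alpha)},E_i)$. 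The multiplication $(S_i)_{(\beta)}\otimes_{R_i}(S_i)_{(\alpha)}\to (S_i)_{(\alpha+\beta)}$ makes $\bigoplus_{\alpha}\Hom_{R_i}((S_i)_{(\alpha)},E_i)$ into a graded $S_i$-module, so $\sh{E}^H$ is genuinely a quasi-coherent sheaf on $\hat X$; under the sign convention of \ref{signconvention} it is precisely the graded submodule $\HOM^{X(H)}_{R_i}(S_i,E_i)$ of $\Hom_{R_i}(S_i,E_i)=\Gamma(\hat U_i,\pi\hat{\ }\sh{E})$. With \ref{gradedinclusions} this realizes both $\sh{E}^H$ and $\sh{E}^G$ as quasi-coherent subsheaves of $\pi\hat{\ }\sh{E}$.

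For the isomorphism $\sh{E}^H\cong\sh{E}^G$, the point is that $H$ is diagonalizable, hence linearly reductive, so each $(S_i)_{(\alpha)}$ is a finitely generated, thus finitely presented, $R_i$-module. Choosing a graded presentation $R_i^a\to R_i^b\to (S_i)_{(\alpha)}\to 0$ and applying $\Hom_{R_i}(-,E_i)$ exhibits $\Hom_{R_i}((S_i)_{(\alpha)},E_i)$ as the kernel of a homogeneous map $E_i^b\to E_i^a$. Since $\sh{E}$ is $T$-equivariant and $U_i$ is $T$-invariant, $E_i$ is fully $X(T)$-graded, and the kernel of a homogeneous map of fully graded modules is again fully graded. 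Hence the ordinary $\Hom$ agrees with its $X(T)$-graded ($T$-finite) part, which is exactly what $\sh{E}^G$ collects; the two sheaves therefore have the same underlying $\sh{O}_{\hat X}$-module and differ only in carrying the $X(G)$- versus the $X(H)$-grading.

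Next I would establish $\sh{O}_X^H\cong\sh{O}_{\hat X}$ by divisorial duality. Taking $\sh{E}=\sh{O}_X$, i.e.\ $E_i=R_i$, the multiplication pairing $(S_i)_{(\alpha)}\otimes_{R_i}(S_i)_{(-\alpha)}\to R_i$ is perfect because $(S_i)_{(\alpha)}$ and $(S_i)_{(-\alpha)}$ are reflexive of rank one with opposite divisor classes (using normality of $W$ and $\codim Z\geq 2$); this yields $\Hom_{R_i}((S_i)_{(\alpha)},R_i)\cong (S_i)_{(-\alpha)}$. Assembling these via $s\mapsto(t\mapsto (st)_{(0)})$ gives an isomorphism of graded $S_i$-modules $S_i\xrightarrow{\sim}\bigoplus_{\alpha}\Hom_{R_i}((S_i)_{(\alpha)},R_i)=\Gamma(\hat U_i,\sh{O}_X^H)$, which glues to $\sh{O}_{\hat X}\cong\sh{O}_X^H$ (and then $\cong\sh{O}_X^G$ by the previous paragraph).

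The remaining claims follow formally. If $E_i$ is a first syzygy module it embeds in a free module $R_i^k$; applying the left-exact functor $\bigoplus_\alpha\Hom_{R_i}((S_i)_{(\alpha)},-)$ and using $\sh{O}_X^H\cong\sh{O}_{\hat X}$ gives an embedding $\Gamma(\hat U_i,\sh{E}^H)\hookrightarrow \Gamma(\hat U_i,\sh{O}_X^H)^k=S_i^k$, so $\Gamma(\hat U_i,\sh{E}^H)$ is again a first syzygy module. If moreover $\sh{E}$ is coherent and torsion free, then each $E_i$ is a finitely generated torsion-free module over the domain $R_i$, hence a first syzygy module, so $\Gamma(\hat U_i,\sh{E}^H)\hookrightarrow S_i^k$; as $S_i$ is Noetherian this submodule of a finite free module is finitely generated and torsion free, giving coherence and torsion-freeness of $\sh{E}^H$, which transfer to $\sh{E}^G$ through the isomorphism. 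The main obstacle is the first step: nailing down the $S_i$-module structure on the restrictions, identifying $\sh{E}^H$ with the graded part $\HOM^{X(H)}_{R_i}(S_i,E_i)$ of $\pi\hat{\ }\sh{E}$, and running the finite-presentation/full-gradedness argument that collapses $\HOM^{X(H)}$ and $\HOM^{X(G)}$; once these and the divisorial duality are in place, the syzygy and coherence statements are routine consequences of left-exactness and the Noetherian property.
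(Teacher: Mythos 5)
Your proposal is correct and follows essentially the same route as the paper: local reduction to $\Hom_{R_i}((S_i)_{(\alpha)},E_i)$, collapsing the graded and ungraded $\Hom$ via finite generation of the isotypic components (you give the finite-presentation argument directly where the paper cites \cite[Cor.\ 2.4.4]{NO2}), the identification $\Hom_{R_i}((S_i)_{(\alpha)},R_i)\cong (S_i)_{(-\alpha)}$ from reflexivity and rank one, and then left-exactness plus the Noetherian property for the syzygy and coherence claims. The differences are only in presentation, not in substance.
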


\begin{proof}
(\ref{firstpropi})
First note that for every $\chi \in X(G)$ which maps to $\alpha \in X(H)$,
we have a natural inclusion of sheaves of $\K$-vector spaces
$\phi_\chi: \mathcal{H}om_{\sh{O}_X}^T(\sh{O}(\chi), \sh{E})
\hookrightarrow \mathcal{H}om_{\sh{O}_X}(\sh{O}(\alpha), \sh{E})$. Summing over all
such characters, we get a map of sheaves
$\sum_{\eta \in X(T)}\phi_{\chi + \eta}:
\bigoplus_{\eta \in X(T)} \mathcal{H}om_{\sh{O}_X}^T(\sh{O}(\chi + \eta), \sh{E})
\rightarrow \mathcal{H}om_{\sh{O}_X}(\sh{O}(\alpha), \sh{E})$.
Locally, we denote $E_i := \Gamma(U_i, \sh{E})$ for every $U_i$ and this map
translates to an isomorphism of $R_i$-modules
$\bigoplus_{\eta \in X(T)} \Hom_{R_i}^{X(T)}((S_i)_{(\chi + \eta)}, E_i) \rightarrow
\HOM_{R_i}^{X(T)}((S_i)_{(\alpha)}, E_i)$. Because $(S_i)_{(\alpha)}$ is a finitely
generated $R_i$-module by our general assumptions, the latter is isomorphic to
 $\Hom_{R_i}((S_i)_\alpha, E_i)$ (see \cite[Cor. 2.4.4]{NO2}). So,
$\phi_\alpha$ is indeed an isomorphism and by summing over all $\chi \in X(G)$,
we get an isomorphism $\sum_{\chi \in X(G)} \phi_\chi: \sh{E}^G \rightarrow \sh{E}^H$.
Now, $\Gamma(\hat{U}_i, \sh{E}^H) \cong \bigoplus_{\alpha \in X(H)}
\Hom_{R_i}((S_i)_{(\alpha)}, E_i)$ and therefore $\sh{E}^H$ (and thus $\sh{E}^G$)
is quasi-coherent. Moreover, observe that locally we have $\Gamma(\hat{U}_i, \pi\hat{\ }\sh{E})
\cong \Hom_{R_i}(S_i, E) \cong \Hom_{R_i}(\bigoplus_{\alpha \in X(H)}(S_i)_{(\alpha)}, E_i)$
$\supseteq \bigoplus_{\alpha \in X(H)}\Hom_{R_i}((S_i)_{(\alpha)}, E_i)$, so $\sh{E}^H$
(and thus $\sh{E}^G$) indeed is a subsheaf of $\pi\hat{\ }\sh{E}$.

(\ref{firstpropiii}) It suffices to show that for any $i$ the module
$\hat{R}_i := \bigoplus_{\alpha \in X(H)}\Hom_{R_i}((S_i)_{(\alpha)}, R_i)$ is
naturally isomorphic to $S_i$. For this, we observe that for every $\alpha \in X(H)$ holds
$(\hat{R}_i)_{(\alpha)} = \Hom_{R_i}((S_i)_{(-\alpha)}, R_i) \cong (S_i)_{(\alpha)}$,
as the $(S_i)_{(\alpha)}$ are reflexive modules of rank one by our general assumptions.
Therefore we have natural isomorphisms $\hat{R}_i \cong \bigoplus_{\alpha \in X(H)}
(\hat{R}_i)_{(\alpha)} \cong \bigoplus_{\alpha \in X(H)} (S_i)_{(\alpha)}$ $\cong S_i$.

(\ref{firstpropii}) By assumption, we can represent $E_i$ as a first syzygy $0 \rightarrow E_i
\rightarrow R_i^{\oplus I}$, where $I$ is some index set. Applying $\bigoplus_{\alpha \in X(H)}
\Hom_{R_i}((S_i)_{(\alpha)}, \ )$ preserves left-exactness and direct sums in the right
argument, and so we obtain
an exact sequence $0 \rightarrow \hat{E}_i \rightarrow \hat{R}_i^{\oplus I} \cong S_i^{\oplus I}$,
where $\hat{E}_i := \bigoplus_{\alpha \in X(H)}\Hom_{R_i}((S_i)_{(\alpha)}, E_i) \cong
\Gamma(\hat{U}_i, \sh{E}^H)$, and
the latter isomorphism follows from (\ref{firstpropiii}).

(\ref{firstpropiv})
It suffices to show that for any $i$ the module
$\hat{E}_i := \bigoplus_{\alpha \in X(H)}\Hom_{R_i}((S_i)_{(\alpha)}, E_i)$ is torsion free.
Because $E_i$ is by assumption torsion free and finitely generated, it can be represented
as a first syzygy module $0 \rightarrow E_i \rightarrow R_i^{n_i}$ for some integer $n_i$.
Applying (\ref{firstpropii}), we obtain an exact sequence $0 \rightarrow \hat{E}_i \rightarrow
S_i^{n_i}$. Hence, $\hat{E}_i$ is finitely generated and torsion free.
\end{proof}

We come now to our main definition.

\begin{definition}\label{liftdef}
Let $\sh{E}$ be a $T$-equivariant quasi-coherent sheaf on $X$. Then we call the $X(G)$-graded $S$-module
\begin{equation*}
\widehat{\sh{E}} := \Gamma(\hat{X}, \sh{E}^G)
\end{equation*}
the {\em lifting} of $\sh{E}$.
\end{definition}

\begin{remark}\label{liftingdef}
Note that the $S$-module $\widehat{\sh{E}}$ carries both a $X(G)$-grading as well as a $X(H)$-grading,
which are given by
\begin{equation*}
\widehat{\sh{E}} \cong \bigoplus_{\alpha \in X(H)} \Hom_{\sh{O}_X}(\sh{O}(\alpha), \sh{E})
\cong \bigoplus_{\chi \in X(G)} \Hom_{\sh{O}_X}^T(\sh{O}(\chi), \sh{E})
\end{equation*}
(see \ref{signconvention} for our convention on the grading).
By construction, lifting is functorial and left-exact.
Moreover, if $X$ is smooth, then every sheaf of the form $\sh{O}(\alpha)$ is invertible and
we have natural isomorphisms $\Hom_{\sh{O}_X}(\sh{O}(\alpha), \sh{E}) \cong \Gamma(X, \sh{E}
\otimes_{\sh{O}_X} \sh{O}(-\alpha))$ for every $\alpha$. (respectively
$\Hom_{\sh{O}_X}^T(\sh{O}(\chi), \sh{E}) \cong \Gamma(X, \sh{E}
\otimes_{\sh{O}_X} \sh{O}(-\chi))^T$ for every $\chi$). In this case, our lifting
functor is naturally equivalent to the usual lifting functor $\Gamma_*$.
\end{remark}

\begin{proposition}\label{leftinverse}
The sheafification functor is left-inverse to the lifting functor.
\end{proposition}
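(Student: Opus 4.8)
The plan is to show that for any $T$-equivariant quasi-coherent sheaf $\sh{E}$ on $X$, the sheafification of its lifting $\widehat{\sh{E}}$ recovers $\sh{E}$, that is, $\widetilde{\widehat{\sh{E}}} \cong \sh{E}$. Since both the sheafification functor and the lifting functor are defined via the local affine data, it suffices to work locally over each $U_i = \spec{R_i}$ of the chosen $T$-invariant affine cover, and then check that the resulting isomorphisms glue. By \ref{cover}, sheafification sends the $X(G)$-graded $S$-module $\widehat{\sh{E}}$ to the sheaf whose sections over $U_i$ are $\Gamma(\hat{U}_i, \widehat{\sh{E}})_{(0)}$, the $X(H)$-degree-zero part of the localization of $\widehat{\sh{E}}$ at $\hat{U}_i$.

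First I would identify the relevant local module. From Definition \ref{liftdef} together with Proposition \ref{firstprop}(\ref{firstpropi}), we have $\Gamma(\hat{U}_i, \sh{E}^H) \cong \bigoplus_{\alpha \in X(H)} \Hom_{R_i}((S_i)_{(\alpha)}, E_i)$ where $E_i = \Gamma(U_i, \sh{E})$. This is precisely the local incarnation of $\widehat{\sh{E}}$ over $\hat{U}_i$. Taking the $X(H)$-degree-zero component singles out the summand indexed by $\alpha = 0$, namely $\Hom_{R_i}((S_i)_{(0)}, E_i) = \Hom_{R_i}(R_i, E_i)$, since $(S_i)_{(0)} = R_i$ by our general assumptions. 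The canonical evaluation-at-$1$ map gives $\Hom_{R_i}(R_i, E_i) \cong E_i = \Gamma(U_i, \sh{E})$, which is exactly the module defining $\sh{E}$ over $U_i$.

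The remaining task is to verify that these local isomorphisms are natural in $i$ and hence glue to a global isomorphism $\widetilde{\widehat{\sh{E}}} \cong \sh{E}$ of sheaves on $X$. This amounts to checking compatibility of the evaluation maps with the restriction maps coming from the inclusions $U_i \cap U_j \hookrightarrow U_i$; since all the identifications used—the decomposition of $\widehat{\sh{E}}$ into $X(H)$-graded pieces, the extraction of the degree-zero part, and the isomorphism $\Hom_{R_i}(R_i, -) \cong \mathrm{id}$—are canonical, naturality should follow formally. I expect the main obstacle to be purely bookkeeping: one must carefully track how the $X(G)$- versus $X(H)$-gradings interact with the localization and the degree-zero truncation, and confirm that the equivalence of the two descriptions of $\widehat{\sh{E}}$ recorded in Remark \ref{liftingdef} is respected by the gluing data. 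Alternatively, and perhaps more cleanly, one could deduce the statement as a formal consequence of the adjunction asserted in Theorem \ref{adjointness}: the composite of a right adjoint followed by its left adjoint, applied here, yields the counit $\widetilde{\widehat{\sh{E}}} \to \sh{E}$, and the computation above shows this counit is an isomorphism.
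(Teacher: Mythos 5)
Your main argument is essentially identical to the paper's proof: both compute locally that the $X(H)$-degree-zero part of $\Gamma(\hat{U}_i,\widehat{\sh{E}})$ is $\Hom_{R_i}((S_i)_{(0)},E_i)=\Hom_{R_i}(R_i,E_i)\cong E_i$ and then glue by naturality. One caveat: the alternative you suggest at the end (deducing the statement from the adjunction of Theorem \ref{adjointness}) would be circular in the paper's logical order, since the proof of that theorem relies on Lemma \ref{maplemma}, which in turn revisits the proof of this very proposition.
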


\begin{proof}
We show that $(\widehat{\sh{E}})\tilde{\ } \cong \sh{E}$ for any $T$-equivariant quasi-coherent
sheaf on $X$. The corresponding statement about morphisms then will be evident.
With notation as in the proof of Proposition \ref{firstprop}, we have for every $i \in I$
\begin{equation*}
\Gamma(U_i, (\widehat{\sh{E}})\tilde{\ }) = \HOM^{X(G)}_{R_i}(S_i, E_i)_{(0)}
= \Hom^{X(G)}_{R_i}((S_i)_{(0)}, E_i) = \Hom^{X(T)}_{R_i}(R_i, E_i) \cong E_i.
\end{equation*}
By naturality, the $E_i$ glue to yield $\sh{E}$.
\end{proof}

Before we can prove our main result, we need to clarify how homomorphism spaces are
related under going back and forth under lifting and sheafification.

\begin{lemma}\label{maplemma}
\begin{enumerate}[(i)]
\item\label{maplemmai} For any $X(G)$-graded $S$-module $E$ there exists a natural homomorphism
of $X(G)$-graded $S$-modules $E \rightarrow \widehat{\tilde{E}}$.
\item\label{maplemmaii} Let $\sh{E}$, $\sh{F}$ be $T$-equivariant quasi-coherent sheaves on $X$. Then sheafification induces a surjective homomorphism of $\K$-vector spaces
\begin{equation*}
\Hom^{X(G)}_S(\widehat{\sh{E}}, \widehat{\sh{F}}) \twoheadrightarrow \Hom^T_{\sh{O}_X}(\sh{E}, \sh{F}).
\end{equation*}
\item\label{maplemmaiii} Let $E$ be a $X(G)$-graded $S$-module and $\sh{F}$ be a quasi-coherent sheaf on $X$.
Then sheafification induces a surjective homomorphism of $K$-vector spaces
\begin{equation*}
\Hom^{X(G)}_S(E, \widehat{\sh{F}}) \rightarrowtail \Hom^T_{\sh{O}_X}(\tilde{E}, \sh{F}).
\end{equation*}
\end{enumerate}
\end{lemma}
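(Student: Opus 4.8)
The plan is to construct the natural maps locally on the affine cover and then argue for the claimed surjectivity using the good behaviour of lifting established in the previous results. For part (\ref{maplemmai}), I would use the adjunction-style data already implicit in the setup: given an $X(G)$-graded $S$-module $E$, sheafify to get $\tilde{E}$, then lift to get $\widehat{\tilde{E}}$. On each $\hat{U}_i$ we have $\Gamma(\hat{U}_i, \tilde{E}) \cong \bigoplus_{\alpha \in X(H)} \Hom_{R_i}((S_i)_{(\alpha)}, (\tilde E)_i)$, where $(\tilde E)_i = \Gamma(U_i, \tilde E) = E_{(0)}$ localized appropriately; the natural map sends a graded piece $E_\chi$ into $\Hom^T_{\sh{O}_X}(\sh{O}(\chi), \tilde E)$ by the evident multiplication-by-a-section assignment $e \mapsto (s \mapsto se)$. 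I would check that these local maps are compatible with the gluing data (which is automatic from naturality of the construction in \ref{cover}) and that the grading conventions of \ref{signconvention} are respected, so that the result is a genuine morphism of $X(G)$-graded $S$-modules $E \to \widehat{\tilde E}$.

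For parts (\ref{maplemmaii}) and (\ref{maplemmaiii}), the core observation is Proposition \ref{leftinverse}: sheafification is left-inverse to lifting, so $(\widehat{\sh{E}})\tilde{\ } \cong \sh{E}$ and $(\widehat{\sh{F}})\tilde{\ } \cong \sh{F}$. Hence given any morphism $\psi : \widehat{\sh{E}} \to \widehat{\sh{F}}$ of $X(G)$-graded $S$-modules, applying the (functorial, $T$-equivariant) sheafification functor produces $\tilde\psi : \sh{E} \to \sh{F}$, which defines the map in (\ref{maplemmaii}); likewise in (\ref{maplemmaiii}) a morphism $E \to \widehat{\sh{F}}$ sheafifies to $\tilde E \to (\widehat{\sh{F}})\tilde{\ } \cong \sh{F}$. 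The real content is \emph{surjectivity}. Given a $T$-equivariant sheaf morphism $\varphi : \sh{E} \to \sh{F}$, I would lift it: since lifting is a functor (Remark \ref{liftingdef}), $\varphi$ induces $\widehat\varphi : \widehat{\sh{E}} \to \widehat{\sh{F}}$, and then $(\widehat\varphi)\tilde{\ } \cong \varphi$ by naturality of the isomorphism in Proposition \ref{leftinverse}. This exhibits $\varphi$ as coming from $\widehat\varphi$, proving surjectivity in (\ref{maplemmaii}). For (\ref{maplemmaiii}), given $\varphi : \tilde E \to \sh{F}$, I would first use part (\ref{maplemmai}) to get $E \to \widehat{\tilde E}$ and then compose with $\widehat\varphi : \widehat{\tilde E} \to \widehat{\sh{F}}$; sheafifying the composite recovers $\varphi$ because sheafifying $E \to \widehat{\tilde E}$ gives the identity on $\tilde E$ (this is the compatibility one must verify).

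The main obstacle I anticipate is precisely that last compatibility: verifying that the composite $\tilde E \to (\widehat{\tilde E})\tilde{\ } \cong \tilde E$ obtained from the unit map of (\ref{maplemmai}) is the identity, which is the ``triangle identity'' one expects from an adjunction but which here must be checked by hand at the level of the local $\Hom_{R_i}$-descriptions since the full adjointness (Theorem \ref{adjointness}) is only proved later. Concretely, on each $\hat{U}_i$ one must trace an element of $E_i$ through the unit map into $\bigoplus_\alpha \Hom_{R_i}((S_i)_{(\alpha)}, (\tilde E)_i)$, take its degree-zero graded component under sheafification, and confirm it returns the original element; the reflexivity and finite-generation of the $(S_i)_{(\alpha)}$ from the general assumptions (used already in Proposition \ref{firstprop}) are what make this identification clean. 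I expect the $T$-equivariance bookkeeping — keeping the $X(T)$-, $X(G)$-, and $X(H)$-gradings aligned under the isomorphism $\sh{E}^G \cong \sh{E}^H$ of Proposition \ref{firstprop}(\ref{firstpropi}) — to be the most error-prone part, but it is routine once the sign convention of \ref{signconvention} is fixed throughout.
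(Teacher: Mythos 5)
Your proposal follows essentially the same route as the paper: part (\ref{maplemmai}) via the multiplication map $e \mapsto (s \mapsto se)$, part (\ref{maplemmaii}) by observing that the composite ``lift then sheafify'' on Hom-spaces is the identity (via Proposition \ref{leftinverse}) so the second map is surjective, and part (\ref{maplemmaiii}) by precomposing with the unit map from (\ref{maplemmai}) and invoking the compatibility with sheafification together with (\ref{maplemmaii}). The compatibility you flag as needing verification is exactly the one the paper also asserts without detailed proof, so your account is faithful and correct.
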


\begin{proof}
(\ref{maplemmai}) Degree-wise we define a map $\phi_{\chi}: E_{(\chi)} \rightarrow \Hom_S^{X(G)}(S_{(-\chi)},
E_{(0)})$ for $\chi \in X(G)$ by setting $(\phi_{\chi}(e))(s) := s \cdot e$ for every $s \in S_{(-\chi)}$.
We leave it to the reader to check that this indeed yields a $X(G)$-homogeneous homomorphism of $S$-modules.

(\ref{maplemmaii}) By revisiting the constructions of the proof of
Proposition \ref{leftinverse}, we conclude that
the functorially induced composition
\begin{equation*}
\Hom^T_{\sh{O}_X}(\sh{E}, \sh{F}) \longrightarrow \Hom^{X(G)}_S(\widehat{\sh{E}}, \widehat{\sh{F}})
\longrightarrow \Hom^T_{\sh{O}_X}(\sh{E}, \sh{F})
\end{equation*}
is a natural isomorphism. In particular, the second homomorphism is surjective.

(\ref{maplemmaiii}) By (\ref{maplemmai}) we obtain a homomorphism of $S$-modules
$\Hom^{X(G)}_S(\widehat{\tilde{E}}, \widehat{\sh{F}}) \rightarrow
\Hom^{X(G)}_S(E, \widehat{\sh{F}})$ which naturally commutes with the maps
$\Hom^{X(G)}_S(\widehat{\tilde{E}}, \widehat{\sh{F}}) \rightarrow$
$\Hom^T_{\sh{O}_X}(\tilde{E}, \sh{F})$ and $\Hom^{X(G)}_S(E, \widehat{\sh{F}})$ $\rightarrow
\Hom^T_{\sh{O}_X}(\tilde{E}, \sh{F})$, respectively, which are induced by sheafication.
By (\ref{maplemmaii}), the first map is surjective, hence the second must be surjective, too.
\end{proof}

The can now show our main results, which in particular implies that lifting is left-exact.

\begin{theorem}\label{adjointness}
The lifting functor from the category of $T$-equivariant quasi-coherent sheaves on $X$ to
the category of $X(G)$-graded $S$-modules is right adjoint to the sheafification functor.
\end{theorem}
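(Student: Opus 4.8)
The plan is to establish the adjunction
\[
\Hom^{X(G)}_S(E, \widehat{\sh{F}}) \cong \Hom^T_{\sh{O}_X}(\tilde{E}, \sh{F})
\]
naturally in the $X(G)$-graded $S$-module $E$ and the $T$-equivariant quasi-coherent sheaf $\sh{F}$. The backbone of the argument is already assembled in Lemma \ref{maplemma}: sheafification yields a natural map
\[
\Phi_{E, \sh{F}}: \Hom^{X(G)}_S(E, \widehat{\sh{F}}) \longrightarrow \Hom^T_{\sh{O}_X}(\tilde{E}, \sh{F}),
\]
and part (\ref{maplemmaiii}) already tells us this map is surjective. So the entire task reduces to verifying that $\Phi_{E, \sh{F}}$ is also injective, together with checking naturality in both variables. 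Injectivity is the only nontrivial point; naturality will follow because every map in sight is built functorially from sheafification, lifting, and the unit $E \to \widehat{\tilde{E}}$ of \ref{maplemmai}.

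For injectivity I would first dispose of the case $E = \tilde{E}$ of a lifted sheaf, i.e. show that
\[
\Hom^{X(G)}_S(\widehat{\sh{E}}, \widehat{\sh{F}}) \longrightarrow \Hom^T_{\sh{O}_X}(\sh{E}, \sh{F})
\]
is \emph{bijective}. This is essentially contained in \ref{maplemmaii}: there the composite
\[
\Hom^T_{\sh{O}_X}(\sh{E}, \sh{F}) \to \Hom^{X(G)}_S(\widehat{\sh{E}}, \widehat{\sh{F}}) \to \Hom^T_{\sh{O}_X}(\sh{E}, \sh{F})
\]
is shown to be the identity (a natural isomorphism), so the second arrow is a split surjection; to upgrade it to a bijection one shows that the first arrow is surjective as well, which amounts to the observation that lifting is fully faithful on lifted objects since sheafification is left-inverse to lifting (Proposition \ref{leftinverse}). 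Concretely, a graded $S$-homomorphism $\widehat{\sh{E}} \to \widehat{\sh{F}}$ sheafifies to an $\sh{O}_X$-map $\sh{E} \to \sh{F}$ by \ref{leftinverse}, and relifting recovers the original by the local description $\widehat{\sh{E}} \cong \bigoplus_\alpha \Hom_{\sh{O}_X}(\sh{O}(\alpha), \sh{E})$, because a morphism of sheaves determines and is determined by its induced maps on all the Hom-modules $\Hom_{\sh{O}_X}(\sh{O}(\alpha), -)$.

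With the lifted case in hand, the general case follows by factoring through the unit. Precompose with the natural map $u_E: E \to \widehat{\tilde{E}}$ from \ref{maplemmai} to obtain the commuting triangle already exhibited in the proof of \ref{maplemmaiii}, namely the square relating $\Hom^{X(G)}_S(\widehat{\tilde{E}}, \widehat{\sh{F}})$, $\Hom^{X(G)}_S(E, \widehat{\sh{F}})$, and $\Hom^T_{\sh{O}_X}(\tilde{E}, \sh{F})$. Given $f \in \Hom^{X(G)}_S(E, \widehat{\sh{F}})$ with $\Phi_{E, \sh{F}}(f) = 0$, one must show $f = 0$. The key is that $f$ is determined by its restriction to the image of $u_E$ together with the fact that $\widehat{\sh{F}}$ ``sees only sheaf data'': since $u_E$ sheafifies to an isomorphism $\tilde{E} \xrightarrow{\sim} (\widehat{\tilde{E}})\tilde{\ }$ and $\Phi$ is bijective on lifted source by the previous paragraph, the vanishing of $\Phi_{E,\sh{F}}(f)$ forces the corresponding map out of $\widehat{\tilde{E}}$ to vanish, and then $f = (f \circ \text{split}) \circ u_E = 0$ because any graded homomorphism into a lifting $\widehat{\sh{F}}$ factors through the unit of the adjunction up to the identifications already established.

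I expect the main obstacle to be precisely the injectivity of $\Phi_{E, \sh{F}}$ for general (non-lifted) $E$, since the unit $u_E$ of \ref{maplemmai} is typically neither injective nor surjective—this is exactly the respect in which $\widehat{\sh{E}}$ improves on $E$—so one cannot simply reduce to the lifted case by an isomorphism. The resolution is to exploit that $\widehat{\sh{F}}$ is a \emph{lifting}: the target of any $f$ is saturated in the sense that homomorphisms into it are detected on sheafifications. Making this last point rigorous, rather than the formal adjunction bookkeeping, is where the real content lies, and it is cleanest to argue it locally on the cover $\{U_i\}$ using the explicit descriptions $\widehat{\sh{F}} \cong \bigoplus_\chi \Hom^T_{\sh{O}_X}(\sh{O}(\chi), \sh{F})$ from Remark \ref{liftingdef} and then glue.
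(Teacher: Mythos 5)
There is a genuine gap in your treatment of injectivity for general $E$, and it sits exactly where you say "the real content lies." Your plan is to deduce $f=0$ from $\Phi_{E,\sh{F}}(f)=0$ by factoring $f$ through the unit $u_E: E \to \widehat{\tilde{E}}$, justified by the claim that "any graded homomorphism into a lifting $\widehat{\sh{F}}$ factors through the unit." But that factorization statement \emph{is} the universal property of the unit, i.e.\ it is equivalent to the adjunction you are trying to prove; invoking it makes the argument circular. You defer its proof to "arguing locally on the cover $\{U_i\}$ and gluing," but this cannot work as stated: $E$ is an arbitrary $X(G)$-graded $S$-module, hence a sheaf on all of $W=\spec{S}$, while the $\hat{U}_i$ only cover the open subset $\hat{X} \subsetneq W$. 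A purely local argument over $X$ (equivalently over $\hat{X}$) sees only the restriction of $E$ to $\hat{X}$ and says nothing about homomorphisms out of the rest of $E$.

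The paper closes precisely this hole with two ingredients you do not have. First, on each affine piece it does not use the unit at all but the graded tensor--hom adjunction $\HOM^{X(G)}_{S_i}\big(E_i, \HOM^{X(G)}_{R_i}(S_i, F_i)\big) \cong \HOM^{X(G)}_{R_i}(E_i, F_i)$, whose degree-zero part gives the local form of the adjunction outright (both injectivity and surjectivity at once); gluing these yields an isomorphism $\Hom^G_{\sh{O}_{\hat{X}}}(\sh{E}, \sh{F}^G) \cong \Hom^T_{\sh{O}_X}(\widetilde{E}, \sh{F})$, where $\sh{E}$ is the restriction of $E$ to $\hat{X}$. Second, to pass from $\hat{X}$ back to $W$, it shows the restriction map $\Hom^G_{\sh{O}_W}(E, \widehat{\sh{F}}) \to \Hom^G_{\sh{O}_{\hat{X}}}(\sh{E}, \sh{F}^G)$ is injective because $\widehat{\sh{F}}$ is the extension of $\sh{F}^G$ across the codimension-$\geq 2$ set $Z = W \setminus \hat{X}$ and therefore has no sections supported on $Z$; combined with the surjectivity of Lemma \ref{maplemma}(\ref{maplemmaiii}) in the commuting triangle, both maps are forced to be isomorphisms. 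The codimension-two hypothesis on $Z$, which your proposal never uses, is thus essential, and your argument needs to be replaced by (or supplemented with) this restriction-to-$\hat{X}$ step. Your surjectivity reduction via Lemma \ref{maplemma}(\ref{maplemmaiii}) and your sketch of the lifted case are consistent with the paper, but they do not repair the general case.
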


\begin{proof}
We first consider the affine situation and assume that $\hat{X} = \spec{S}$ and $X = \spec{R} = \spec{S_{(0)}}$.
Denote $E$ an $X(G)$-graded $S$-module and $F$ an $X(T)$-graded (and therefore $X(G)$-graded, see
\ref{extendedgrading}) $R$-module.
For simplicity, we write $\widehat{F}$ for the lifting of $F$.
Then we have the isomorphisms of $X(G)$-graded $R$-modules
\begin{align*}
\HOM^{X(G)}_S(E, \widehat{F}) & = \HOM^{X(G)}_S\big(E, \HOM^{X(G)}_R(S, F)\big)\\
& \cong \HOM^{X(G)}_R(E \otimes_S S, F) \cong \HOM^{X(G)}_R(E, F).
\end{align*}
Taking invariants with respect to the $X(G)$-grading, we get
\begin{equation*}
\HOM^{X(G)}_S(E, \widehat{F})_{(0)} = \HOM^{X(G)}_R(E, F)_{(0)} = \Hom_R^{X(G)}(E_0, F)
= \Hom_R^{X(T)}(E_0, F),
\end{equation*}
where the second equality follows form the fact that $F$ is concentrated in $X(H)$-degree zero.

For the general case, consider a $T$-equivariant sheaf $\sh{F}$ on $X$ and an $X(G)$-graded
$S$-module $E$ whose restriction to $\hat{X}$ corresponds to a $G$-equivariant quasi-coherent
sheaf $\sh{E}$. As above, denote $\{U_i\}_{i \in I}$, $\{\hat{U}_i\}_{i \in I}$ a $T$-invariant
(resp. $G$-invariant) affine cover of $X$ (resp. $\hat{X}$). The affine case considered
before corresponds to isomorphisms
$\Gamma\big(\hat{U}_i, \mathcal{H}om^G_{\sh{O}_{\hat{U}_i}}(\sh{E}\vert_{\hat{U}_i},
\sh{F}^G\vert_{\hat{U}_i})\big) \rightarrow
\Gamma\big(U_i, \mathcal{H}om^T_{\sh{O}_{U_i}}(\widetilde{E}\vert_{U_i},
\sh{F}\vert_{U_i})\big)$ for every $i \in I$. These isomorphisms commute naturally with the restrictions
\begin{equation*}
\Gamma(\hat{U}_i, \mathcal{H}om^G_{\sh{O}_{\hat{U}_i}}(\sh{E}\vert_{\hat{U}_i},
\sh{F}^G\vert_{\hat{U}_i})) \rightarrow \Gamma(\hat{U}_i \cap \hat{U}_j,
\mathcal{H}om^G_{\sh{O}_{\hat{U}_i \cap \hat{U}_j}}(\sh{E}\vert_{\hat{U}_i \cap \hat{U}_j},
\sh{F}^G\vert_{\hat{U}_i \cap \hat{U}_j}))
\end{equation*}
and
\begin{equation*}
\Gamma\big(U_i, \mathcal{H}om^T_{\sh{O}_{U_i}}(\widetilde{E}\vert_{U_i}, \sh{F}\vert_{U_i})\big)
\rightarrow \Gamma\big(U_i \cap U_j, \mathcal{H}om^T_{\sh{O}_{U_i \cap U_j}}(\widetilde{E}\vert_{U_i \cap U_j},
\sh{F}\vert_{U_i \cap U_j})\big),
\end{equation*}
respectively for $i, j \in I$. Therefore we obtain an induced homomorphism
\begin{equation*}
\Hom^G_{\sh{O}_{\hat{X}}}(\sh{E}, \sh{F}^G) = \Gamma\big(\hat{X}, \mathcal{H}om^G_{\sh{O}_{\hat{X}}}
(\sh{E}, \sh{F}^G)\big) \rightarrow \Gamma\big(X, \mathcal{H}om^T_{\sh{O}_X}(\widetilde{E}, \sh{F})\big) =
\Hom^T_{\sh{O}_X}(\widetilde{E}, \sh{F}).
\end{equation*}
By the naturality of the local isomorphisms and the property  that 
$\mathcal{H}om^G_{\sh{O}_{\hat{X}}} (\sh{E}, \sh{F}^G)$ is a sheaf it follows that this
homomorphism is an isomorphism.
It remains to show that $\Hom_S^{X(G)}(E, \widehat{\sh{F}}) = \Hom^G_{\sh{O}_W}(E, \widehat{\sh{F}})$ equals
$\Hom^G_{\sh{O}_{\hat{X}}}(\sh{E}, \sh{F}^G)$. For this, consider the commutative diagram
\begin{equation*}
\xymatrix{
\Hom^G_{\sh{O}_W}(E, \widehat{\sh{F}}) \ar[d]_\phi \ar[rd]^\psi \\
\Hom^G_{\sh{O}_{\hat{X}}}(\sh{E}, \sh{F}^G) \ar[r]_{\cong} & \Hom^T_{\sh{O}_X}(\widetilde{E}, \sh{F}),
}
\end{equation*}
where $\phi$ is the restriction map and $\psi$ the map induced by the sheafification functor.
$\phi$ is injective because $\widehat{\sh{F}}$ is an extension of $\sh{F}^G$ from $\hat{X}$
to $W$ and therefore does not have torsion with support on $Z$. Now, $\psi$ is surjective by
Lemma \ref{maplemma} (\ref{maplemmaiii}), hence both $\phi$ and $\psi$ are isomorphisms.
\end{proof}

\begin{remark}
From the proofs of \ref{leftinverse} and \ref{adjointness}, it follows that the counit of the
adjunction is for every $T$-equivariant quasicoherent sheaf $\sh{E}$ given by the natural map
$(\widehat{\sh{E}})\tilde{\ } \rightarrow \sh{E}$ which, using notation from the proof of
\ref{leftinverse}, is locally given by the natural isomorphisms $\Hom^{X(T)}_{R_i}(R_i, E_i)
\overset{\equiv}{\longrightarrow} E_i$. This is an interesting observation, as it implies that
the category of $T$-equivariant sheaves on $X$ is a reflective localization of the category of
$X(G)$-graded $S$-modules by the kernel of the sheafification functor. This was previously
only known for the case where $X$ is smooth. As was pointed out to
me by M. Barakat and M. Lange-Hegermann, this is relevant for current work \cite{BLH12b}
related to computational toric geometry.
\end{remark}

\section{Coherence}\label{coherencesection}

We have seen in Proposition \ref{firstprop} that a torsion free coherent sheaf $\sh{E}$ on $X$
lifts to a torsion free coherent sheaf $\sh{E}^H$ on $\hat{X}$. In this section we want to give
similar and refined criteria for the lifting $\widehat{\sh{E}}$.

\begin{sub}
By Proposition \ref{firstprop} (\ref{firstpropi}), properties such as coherence and torsion-freeness
do not depend on the additional $T$-equivariant structure of $\sh{E}$. As our proofs below depend
on finding suitable open subsets on $X$, which must not necessarily be $T$-invariant, we will
therefore consider without loss of generality only the coarser grading by $X(H)$ rather than the
$X(G)$-grading.
\end{sub}

\begin{proposition}\label{divisorlift}
Let $D$ be a Weil divisor on $X$ and denote $\alpha \in X(H) \cong A_{d - 1}(X)$ the corresponding
class. Then $\widehat{\sh{O}(D)} \cong S(\alpha)$. In particular, $\widehat{\sh{O}}_X \cong S$.
\end{proposition}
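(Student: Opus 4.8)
The plan is to reduce the claim to a purely local, affine computation on each chart $\hat{U}_i = \spec{S_i}$ and then glue, using the machinery already established in Proposition~\ref{firstprop} and Definition~\ref{liftdef}. By the remark preceding this statement, I may work with the coarser $X(H)$-grading throughout, so it suffices to exhibit a natural isomorphism $\widehat{\sh{O}(D)} \cong S(\alpha)$ of $X(H)$-graded $S$-modules, where $\alpha \in X(H) \cong A_{d-1}(X)$ is the class of $D$.

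First I would unwind the definition of lifting. Since $\sh{O}(D) \cong \sh{O}(\alpha) \cong \widetilde{S(\alpha)}$ is the distinguished reflexive rank-one sheaf attached to the class $\alpha$, its lifting is
\[
\widehat{\sh{O}(D)} = \Gamma\big(\hat{X}, \sh{O}(D)^H\big) \cong \bigoplus_{\beta \in X(H)} \Hom_{\sh{O}_X}\big(\sh{O}(\beta), \sh{O}(\alpha)\big).
\]
Locally on $U_i = \spec{R_i}$ this is, by the computation in the proof of Proposition~\ref{firstprop}, the $S_i$-module $\bigoplus_{\beta \in X(H)} \Hom_{R_i}\big((S_i)_{(\beta)}, (S_i)_{(\alpha)}\big)$. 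The key point is that each $(S_i)_{(\beta)}$ is a reflexive $R_i$-module of rank one by our general assumptions, so $\Hom_{R_i}\big((S_i)_{(\beta)}, (S_i)_{(\alpha)}\big) \cong (S_i)_{(\alpha - \beta)}$. This is precisely the rank-one-reflexive analogue of the isomorphism $(\hat{R}_i)_{(\beta)} \cong (S_i)_{(\beta)}$ already exploited in the proof of Proposition~\ref{firstprop}~(\ref{firstpropiii}); indeed, Proposition~\ref{firstprop}~(\ref{firstpropiii}) is exactly the case $\alpha = 0$, which gives $\widehat{\sh{O}}_X \cong S$, and the general case is obtained by the same argument with the target shifted by $\alpha$.

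Reindexing the direct sum by $\gamma = \alpha - \beta$ then yields $\bigoplus_{\gamma} (S_i)_{(\gamma)} \cong S_i$, and tracking the $X(H)$-grading shows that the summand landing in degree $\beta$ is $(S_i)_{(\alpha - \beta)}$, so the grading is shifted by $\alpha$: we obtain $\Gamma(\hat{U}_i, \widehat{\sh{O}(D)}) \cong S_i(\alpha)$ as graded $S_i$-modules. Finally I would check that these local isomorphisms are natural in $i$ and hence compatible with the restriction maps to intersections $\hat{U}_i \cap \hat{U}_j$, so that they glue to a global isomorphism $\widehat{\sh{O}(D)} \cong S(\alpha)$; the specialization to $\alpha = 0$ gives $\widehat{\sh{O}}_X \cong S$.

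The one step requiring genuine care is justifying $\Hom_{R_i}\big((S_i)_{(\beta)}, (S_i)_{(\alpha)}\big) \cong (S_i)_{(\alpha - \beta)}$: this rests on the fact that Hom into a rank-one reflexive module from a rank-one reflexive module recovers the difference class, which in turn uses that the modules in question are exactly the $R_i$-modules of sections of $\sh{O}(\beta)$ and $\sh{O}(\alpha)$ and that reflexivity lets one compute the $\shHom$ on the codimension-one-complement locus. Everything else is the formal bookkeeping of regrading and gluing, both of which follow the template already set in Proposition~\ref{firstprop}.
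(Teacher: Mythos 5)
Your argument is correct and is essentially the paper's: both rest on the decomposition $\widehat{\sh{O}(D)} \cong \bigoplus_{\beta \in X(H)} \Hom_{\sh{O}_X}(\sh{O}(\beta), \sh{O}(\alpha))$ from Remark \ref{liftingdef} together with the fact that Hom between the rank-one reflexive sheaves of classes $\beta$ and $\alpha$ is the one of class $\alpha - \beta$. The paper just executes this globally in one line, identifying $\bigoplus_\beta \Gamma(\sh{O}(\alpha-\beta))$ with $S(\alpha)$ via the standing assumption on $S$, where you localize to the charts $\hat{U}_i$ and glue.
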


\begin{proof}
By the isomorphism $\sh{O}(D) \cong \sh{O}(\alpha)$, we have a decomposition
as observed in Remark \ref{liftingdef}:
\begin{equation*}
\widehat{\sh{O}(D)} \cong
\bigoplus_{\beta \in X(H)} \Hom_{\sh{O}_X}(\sh{O}(\beta), \sh{O}(\alpha)) \cong
\bigoplus_{\beta \in X(H)} \Gamma(\hat{X}, \sh{O}(\alpha - \beta)) \cong S(\alpha).
\end{equation*}
\end{proof}

\begin{sub}\label{makeitlocal}
By the general properties of good quotients, any open subset $U$ of $X$ can be represented
as a good quotient $\hat{U} \goodquot H$, where $\hat{U}$ is the preimage of $U$ in $\hat{X}$ 
under the quotient map. If $U = \spec{R}$, then from the proof of Propositions \ref{firstprop}
(\ref{firstpropiii}) and \ref{divisorlift}, we can conclude that $\hat{U} = \spec{\widehat{R}}$
and $R = \widehat{R}_{(0)}$ with respect to the natural $X(H)$-grading of $\widehat{R}$.
\end{sub}

\begin{theorem}\label{coherencetheorem1}
Let $\sh{E}$ be a $T$-equivariant coherent sheaf on $X$.
\begin{enumerate}[(i)]
\item\label{coherencetheorem1i} If $\sh{E}$ is torsion free then $\widehat{\sh{E}}$ is torsion free and finitely generated.
\item\label{coherencetheorem1ii} If $\sh{E}$ is reflexive then $\widehat{\sh{E}}$ is reflexive and finitely generated.
\end{enumerate}
\end{theorem}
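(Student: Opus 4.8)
The plan is to reduce both parts to a single codimension-two extension principle for the non-affine open $\hat X = W \setminus Z$. Since $W = \spec{S}$ is normal and $\codim_W Z \geq 2$, I claim that for every coherent reflexive sheaf $\sh{G}$ on $\hat X$ the $S$-module $\Gamma(\hat X, \sh{G})$ is finitely generated and reflexive. To see this, write $j \colon \hat X \hookrightarrow W$ and $\sh{G} \cong \mathcal{H}om_{\sh{O}_{\hat X}}(\sh{G}^\vee, \sh{O}_{\hat X})$; extending the coherent sheaf $\sh{G}^\vee$ to a coherent sheaf $\sh{H}$ on the affine $W$, the dual $\mathcal{H}om_{\sh{O}_W}(\sh{H}, \sh{O}_W)$ is coherent and reflexive on $W$, restricts to $\sh{G}$ on $\hat X$, and---being reflexive on a normal variety with a codimension-$\geq 2$ locus removed---equals $j_*\sh{G}$. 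Hence $\Gamma(\hat X, \sh{G}) = \Gamma(W, j_*\sh{G})$ is the module of global sections of a coherent reflexive sheaf on the affine normal $W$, so it is finitely generated and reflexive over $S$. This lemma is where the hypothesis $\codim Z \geq 2$ enters, and it is the engine for everything below.

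For part (\ref{coherencetheorem1i}), Proposition \ref{firstprop} (\ref{firstpropiv}) already gives that $\sh{E}^G$ is coherent and torsion free on $\hat X$. Let $\sh{R} := (\sh{E}^G)^{\vee\vee}$ be its reflexive hull, a coherent reflexive sheaf on the normal variety $\hat X$ into which $\sh{E}^G$ injects. Applying the left-exact functor $\Gamma(\hat X, -)$ to $0 \to \sh{E}^G \to \sh{R}$ yields an injection $\widehat{\sh{E}} = \Gamma(\hat X, \sh{E}^G) \hookrightarrow \Gamma(\hat X, \sh{R})$. By the lemma $\Gamma(\hat X, \sh{R})$ is a finitely generated $S$-module; since $S$ is Noetherian, its submodule $\widehat{\sh{E}}$ is finitely generated as well. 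Torsion-freeness of $\widehat{\sh{E}}$ is inherited from that of $\Gamma(\hat X, \sh{R})$ (equivalently, global sections of the torsion-free sheaf $\sh{E}^G$ on the integral scheme $\hat X$ inject into the stalk at the generic point).

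For part (\ref{coherencetheorem1ii}) it suffices, by the lemma, to prove that $\sh{E}^G$ is reflexive on $\hat X$; reflexivity is local, so I would work on a chart $\hat U_i = \spec{S_i}$ with $U_i = \spec{R_i}$ and $E_i = \Gamma(U_i, \sh{E})$. Since $\sh{E}$ is reflexive and $R_i$ is a normal domain, $E_i$ is a reflexive $R_i$-module, hence a second syzygy: dualizing a presentation of $E_i^\vee$ gives an exact sequence $0 \to E_i \to R_i^{\,n} \to R_i^{\,m}$. Now apply the left-exact functor $\Phi := \bigoplus_{\alpha \in X(H)} \Hom_{R_i}\big((S_i)_{(\alpha)}, -\big)$ from the proof of Proposition \ref{firstprop}; by part (\ref{firstpropiii}) it carries $R_i^{\,n}$ to $S_i^{\,n}$, and left-exactness produces an exact sequence $0 \to \hat E_i \to S_i^{\,n} \to S_i^{\,m}$ with $\hat E_i = \Gamma(\hat U_i, \sh{E}^G)$. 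Thus $\hat E_i$ is a second syzygy over the normal domain $S_i$, and is therefore reflexive. As the $\hat U_i$ cover $\hat X$, the sheaf $\sh{E}^G$ is reflexive, and the lemma shows $\widehat{\sh{E}} = \Gamma(\hat X, \sh{E}^G)$ is finitely generated and reflexive over $S$.

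The only non-formal obstacle is finite generation: global sections of a coherent sheaf on the non-affine $\hat X$ need not be finitely generated, and it is precisely the torsion-free (resp. reflexive) extension across the codimension-$\geq 2$ locus $Z$ onto the affine $W$ that rescues it. The supporting ring-theoretic facts I would assemble are standard for a normal domain $A$: the dual of a finitely generated module is reflexive; a finitely generated module is reflexive iff it is torsion free and satisfies Serre's condition $S_2$, equivalently iff it is a second syzygy; and a reflexive sheaf equals the pushforward of its restriction to the complement of any closed subset of codimension $\geq 2$. The step most deserving of care is "second syzygy $\Rightarrow$ reflexive" over $S_i$, which I would verify with the depth lemma applied to $0 \to \hat E_i \to S_i^{\,n} \to \image \to 0$: torsion-freeness forces $\operatorname{depth}(\image)_{\mathfrak p} \geq 1$ at primes of height $\geq 1$, whence $\operatorname{depth}(\hat E_i)_{\mathfrak p} \geq \min(2, \dim(S_i)_{\mathfrak p})$, giving $S_2$ and hence reflexivity.
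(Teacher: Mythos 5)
Your proposal is correct and follows essentially the same route as the paper: establish the desired property locally for $\sh{E}^G$ on the charts $\hat{U}_i$ by applying the left-exact functor $\bigoplus_{\alpha}\Hom_{R_i}\bigl((S_i)_{(\alpha)}, -\bigr)$ to a syzygy presentation of $E_i$, and then pass to $\widehat{\sh{E}} = \Gamma(\hat{X}, \sh{E}^G)$ via the codimension-$\geq 2$ extension across $Z$. The only differences are cosmetic: you use the second-syzygy characterization of reflexivity where the paper invokes Hartshorne's Proposition 1.1 (kernel of a locally free sheaf onto a torsion-free one), and you spell out the extension lemma and the finite-generation step for the torsion-free case (via the reflexive hull) that the paper compresses into a citation of \cite[\S 1]{Hart1}.
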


\begin{proof}
First we note that by the fact that $\hat{X}$ has codimension $2$ in $X$, coherence (as well as
torsion-freeness and reflexivity, respectively, see \cite[\S 1]{Hart1}) of $\sh{E}^H$ implies
that the $S$-module $\widehat{\sh{E}}$ is finitely generated
(and torsion free, respectively reflexive).
So, assertion (\ref{coherencetheorem1i}) follows from Proposition
\ref{firstprop} (\ref{firstpropiv}).

Now we prove (\ref{coherencetheorem1ii}). If $\sh{E}$ is reflexive, then by
\cite[Proposition 1.1]{Hart1}, we can choose for every point in $X$ a neighbourhood
$U = \spec{R}$ such that there exists a short exact sequence
\begin{equation*}
0 \longrightarrow \Gamma(U, \sh{E}) \longrightarrow R^n \longrightarrow F \longrightarrow 0,
\end{equation*}
where $F$ is a finitely generated, torsion free $R$-module. By \ref{makeitlocal}, we have
$U \cong \hat{U} \goodquot H$ with $\hat{U} = \spec{\widehat{R}}$ and
we can lift this sequence to
\begin{equation*}
0 \longrightarrow \Gamma(\hat{U}, \sh{E}^H) \longrightarrow \widehat{R}^n \longrightarrow G
\longrightarrow 0,
\end{equation*}
where $G$ is the homomorphic image of $S^n$ in $\widehat{F}$ and therefore torsion-free
by Proposition \ref{firstprop} (\ref{firstpropiv}).
Applying again \cite[Proposition 1.1]{Hart1}, we conclude that $\sh{E}^G$ locally reflexive and therefore reflexive. Hence, as the complement of $\hat{X}$ in $W$ has codimension at least two,
the module $\widehat{\sh{E}}$ is reflexive by \cite[Proposition 1.6]{Hart1}.
\end{proof}

\comment{
By imposing some additional conditions we can show that lifting always preserves coherence.

\begin{theorem}\label{coherencetheorem2}
Assume that we can choose covers $\{U_i\}_{i \in I}$ and $\{\hat{U}_i\}_{i \in I}$ of $X$ and $\hat{X}$,
respectively, such that for every $i \in I$ we have $\hat{U}_i = \pi^{-1}(U_i)$. Moreover, assume that
for every $i \in I$ we have a splitting of group schemes $H \cong H_i \times L_i$ such that $L_i \cong
\spec{\K[A_i]}$ with $A_i$ a finite abelian group, and there exists a compatible decomposition $\hat{U}_i \cong
H_i \times V_i$. Then every $T$-equivariant coherent sheaf $\sh{E}$ lifts to a finitely-generated
$X(G)$-graded $S$-module.
\end{theorem}

\begin{proof}
Using our conditions, the good quotients $U_i = \hat{U}_i \goodquot H$ can be factored into two steps, i.e.,
we have $V_i \cong \hat{U}_i \goodquot H_i$ and $U_i \cong V_i \goodquot L_i$. If we denote $T_i$ the
coordinate ring of $V_i$, then we have $T_i \cong \bigoplus_{a \in A_i} T_{i, a}$, where
every $T_{i, a}$ is a finitely generated $R_i$-module. Denote $E_i := \Gamma(U_i, \sh{E})$, then
$\bigoplus_{a \in A_i} \Hom_{R_i}(T_{i, -a}, E_i)$ is a finite sum of finitely generated $R_i$-modules.
Hence, we can assume without loss of generality that $V_i = U_i$. It remains to show that
$\widehat{\sh{E}}$ is coherent if $A_i = 0$ for every $i \in I$.
For this, consider
\begin{align*}
\HOM_{R_i}^{X(H_i)}(R_i \otimes_\K \K[H_i], E_i) & \cong \HOM_\K^{X(H_i)}\big(\K[H_i], \HOM_{R_i}^{X(H_i)}(R_i, E_i)\big)\\
& \cong \bigoplus_{\chi \in X(H_i)} \Hom_\K(\K, E) \cong \bigoplus_{\chi \in X(H_i)} E_i\\
& \cong E \otimes_\K \K[H_i] \cong E \otimes_{R_i} (R_i \otimes_\K \K[H_i]) \cong E \otimes_{R_i} S_i.
\end{align*}
It follows that $\Gamma(\hat{U_i}, \sh{E}^H)$ is a finitely generated $S_i$-module.
\end{proof}

\begin{corollary}
If $X$ is a simplicial toric variety and $S$ its standard homogeneous coordinate ring,
then the lift of a coherent sheaf over $X$ is a finitely generated $S$-module.
\end{corollary}
}

We will see in Example \ref{klifting} that in general, coherence is not preserved for sheaves
with torsion.

\section{The case of toric sheaves}\label{toricsection}

We now assume that $X$ is a $d$-dimensional toric variety with associated fan
$\Delta$ and $\hat{X} \subseteq \Z^{\Delta(1)} = W$ its standard quotient presentation.
As a general reference to toric geometry we refer to \cite{oda} and \cite{Fulton};
for specifics of our setting see also \cite{perling7}.

\begin{sub}
It is customary to denote
$M := X(T) \cong \Z^d$ and $\hat{T} := G$, such that
$X(G) \cong \Z^{\Delta(1)}$. Moreover, we denote $N = M^*$ and $\Delta$ consists of strictly convex polyhedral
cones in $N \otimes_\Z \R$. We denote $\{l_\rho\}_{\rho \in \Delta(1)}$ the set of primitive
vectors of the rays in $\Delta$, which we interpret as linear forms on $M$. Elements $m \in M$ can
be considered as regular functions on $T$ and therefore as rational functions on $X$. In this case,
we write $\chi(m)$, where $\chi(m + m') = \chi(m) \chi(m')$ for any $m, m' \in M$.
We have $X(H) \cong A_{d - 1}(X)$ and the inclusion of $M$
in to $\Z^{\Delta(1)}$ yields the short exact sequence
\begin{equation*}
0 \longrightarrow M \xrightarrow{\ L \ } \Z^{\Delta(1)} \longrightarrow A_{d - 1}(X) \longrightarrow 0,
\end{equation*}
where $L$ can be represented as a matrix whose rows are formed by the $l_\rho$.
For any strictly convex rational polyhedral cone $\sigma \in \Delta$, we get an affine toric variety
$U_\sigma$ whose $M$-graded coordinate ring is given by $\ksm$ with $\sigma_M = \check{\sigma} \cap M$
and $\check{\sigma}$ denotes the dual cone of $\sigma$ in $M \otimes_\Z \R$.
Similarly, we get an exact sequence
\begin{equation*}
M \xrightarrow{\ L_\sigma \ } \Z^{\sigma(1)} \longrightarrow A_{d - 1}(U_\sigma) \longrightarrow 0,
\end{equation*}
where $L_\sigma$ is the submatrix of $L$ consisting of the rows which correspond to rays in
$\sigma(1)$.
\end{sub}

We start by recalling some facts about toric sheaves on affine toric
varieties and poset representations from \cite{perling1} and  \cite{perling7}.
Assume that $\sigma$ is a cone and $S = \K[\N^{\sigma(1)}]$
the homogeneous coordinate ring.
For any $m, m' \in M$ we write $m \leq_\sigma m'$
if and only if $m' - m \in \sigma_M$. This way we get a preordered set $(M, \leq_\sigma)$, which is
partially ordered if $\dim \sigma = d$. Equivalently, $M$ becomes a small category, where
the morphisms are given by pairs $(m, m')$ whenever $m \leq_\sigma m'$. By the preorder
$\leq_\sigma$, $M$ also becomes a topological space. Its topology is generated by open
sets $U(m) = \{m' \in M \mid m \leq_\sigma m'\}$ for every $m \in M$.

\begin{proposition}[{\cite[Prop. 5.5]{perling1}  \& \cite[Prop. 2.5]{perling7}}]
\label{repsheafequiv}
The following categories are equivalent:
\begin{enumerate}[(i)]
\item Toric sheaves on $U_\sigma$.
\item $M$-graded $\ksm$-modules.
\item Functors from $(M, \leq_\sigma)$ to the category of $\K$-vector spaces.
\item Sheaves of $\K$-vector spaces on $M$.
\end{enumerate}
\end{proposition}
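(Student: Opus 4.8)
The plan is to prove the four-fold equivalence by establishing the chain (i) $\Leftrightarrow$ (ii) $\Leftrightarrow$ (iii) $\Leftrightarrow$ (iv), treating each link separately and checking naturality in morphisms throughout.

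For (i) $\Leftrightarrow$ (ii), I would invoke the standard correspondence between $T$-equivariant quasi-coherent sheaves on an affine $T$-scheme and graded modules over its graded coordinate ring. Since $U_\sigma = \spec{\ksm}$ is affine and $X(T) = M$, the $T$-action is exactly the datum of the $M$-grading of $\ksm$ by $\sigma_M = \dual{\sigma} \cap M$, and a $T$-equivariant structure on a quasi-coherent sheaf $\widetilde{E}$ amounts to an $M$-grading on $E = \Gamma(U_\sigma, \widetilde{E})$ compatible with that of $\ksm$. Morphisms correspond on both sides, so this is just the affine toric instance of the sheafification correspondence recalled in \ref{cover}.

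For (ii) $\Leftrightarrow$ (iii), given an $M$-graded module $E = \bigoplus_{m \in M} E_m$ I define a functor $F \colon (M, \leq_\sigma) \to \kvect$ by $F(m) = E_m$, sending a morphism $m \leq_\sigma m'$ (that is, $m' - m \in \sigma_M$) to multiplication by the monomial $\chi(m' - m)$, which carries $E_m$ into $E_{m'}$. Functoriality is immediate from $\chi(m'' - m')\,\chi(m' - m) = \chi(m'' - m)$. Conversely, a functor $F$ reassembles into $E = \bigoplus_m F(m)$, with $\chi(u)$ for $u \in \sigma_M$ acting through the structure map $F(m) \to F(m+u)$; because $\ksm$ is spanned by the $\chi(u)$ and every relation among them is multiplicative, functoriality is precisely what makes this a well-defined $\ksm$-module structure. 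These constructions are mutually inverse, giving the equivalence.

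For (iii) $\Leftrightarrow$ (iv), I use the Alexandrov-type correspondence for the topology generated by the minimal open sets $U(m)$, whose open sets are exactly the up-sets of $(M, \leq_\sigma)$. To a sheaf $\mathcal{G}$ I associate the functor $m \mapsto \mathcal{G}(U(m))$; since $m \leq_\sigma m'$ gives $U(m') \subseteq U(m)$, the restriction maps furnish the covariant structure. In the reverse direction, from a functor $F$ I form the presheaf $W \mapsto \varprojlim_{m \in W} F(m)$, and because $m$ is an initial object of $U(m)$ this limit recovers $F(m)$, so the two assignments invert each other on stalks. The main obstacle is here: I must verify that the limit-presheaf is genuinely a sheaf and that stalk-taking and limit-extension are mutually inverse equivalences rather than merely an adjoint pair. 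For this I would use that every open set is a union of the minimal neighbourhoods $U(m)$, so that the sheaf axiom reduces to the universal property of the inverse limits along the poset, forcing a section over $W$ to be exactly a compatible family, i.e. an element of $\varprojlim_{m \in W} F(m)$. Composing the three verified equivalences then yields the claim.
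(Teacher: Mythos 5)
Your argument is correct, and it follows essentially the route of the cited sources: the paper itself gives no proof of this proposition but quotes it from \cite{perling1} and \cite{perling7}, and the remark immediately following it (that the sheaf associated to a representation assigns $\varprojlim_{m \in U} E_m$ to an open set $U$) confirms that the intended equivalence (iii)$\Leftrightarrow$(iv) is exactly the Alexandrov-type limit construction you describe, while (i)$\Leftrightarrow$(ii) is the standard affine equivariant correspondence and (ii)$\Leftrightarrow$(iii) the usual monoid-algebra/functor dictionary. The one point you rightly flag --- that the limit presheaf satisfies the gluing axiom and that the two constructions are mutually inverse rather than merely adjoint --- is handled correctly by your reduction to the minimal open neighbourhoods $U(m)$, each of which has $m$ as initial object.
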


Note that, given a representation $E$ of $(M, \leq_\sigma)$, the associated sheaf assigns
to any open subset $U$ of $M$ the limit $\ilim E_m$, with $m \in U$  (see \cite[Prop. 2.5]{perling7}).

Similarly, $\N^{\sigma(1)}$ induces a partial order ``$\leq$'' on $\Z^{\sigma(1)}$, which is compatible
with $\leq_\sigma$ n the following way.

\begin{lemma}
$L_\sigma(m) \leq L_\sigma(m')$ if and only if $m \leq_\sigma m'$.
\end{lemma}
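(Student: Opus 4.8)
The plan is to strip the statement down to a single lattice vector and then recognize it as the defining inequality system of the dual cone. Since $L_\sigma$ is $\Z$-linear we have $L_\sigma(m') - L_\sigma(m) = L_\sigma(m' - m)$, and by the definition of the order on $\Z^{\sigma(1)}$ induced by $\N^{\sigma(1)}$, the relation $L_\sigma(m) \leq L_\sigma(m')$ holds exactly when $L_\sigma(m' - m) \in \N^{\sigma(1)}$. Likewise $m \leq_\sigma m'$ means $m' - m \in \sigma_M$. Writing $u := m' - m \in M$, the lemma is therefore equivalent to the single assertion
\[
L_\sigma(u) \in \N^{\sigma(1)} \iff u \in \sigma_M .
\]

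Next I would unwind both sides through the forms $l_\rho$. By construction $L_\sigma(u) = (l_\rho(u))_{\rho \in \sigma(1)}$, so $L_\sigma(u) \in \N^{\sigma(1)}$ says precisely that $l_\rho(u) \geq 0$ for every $\rho \in \sigma(1)$; the integrality $l_\rho(u) \in \Z$ is automatic since $u \in M$ and $l_\rho \in N = M^*$. On the other side, $u \in \sigma_M = \check{\sigma} \cap M$ means $u \in M$ together with $\langle u, v \rangle \geq 0$ for all $v \in \sigma$, where $\langle \cdot, \cdot \rangle$ is the pairing between $M_\R$ and $N_\R$ and $\langle u, l_\rho \rangle = l_\rho(u)$.

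The one step carrying actual content is the reduction from ``nonnegative on all of $\sigma$'' to ``nonnegative on the rays of $\sigma$''. Here I would use that $\sigma \subseteq N_\R$ is a rational polyhedral cone generated by the primitive vectors $\{ l_\rho \}_{\rho \in \sigma(1)}$, so every $v \in \sigma$ is a nonnegative combination $v = \sum_{\rho \in \sigma(1)} a_\rho l_\rho$ with $a_\rho \geq 0$. Then $\langle u, v \rangle = \sum_\rho a_\rho\, l_\rho(u)$ is nonnegative for all such $v$ if and only if $l_\rho(u) \geq 0$ for each $\rho$. Hence $u \in \check{\sigma}$ is equivalent to $L_\sigma(u) \in \R_{\geq 0}^{\sigma(1)}$, and intersecting with the lattice ($u \in M$) matches the two conditions and yields the displayed equivalence.

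I do not anticipate a genuine obstacle: the lemma is really the statement that $\check{\sigma}$ is cut out by the inequalities attached to a generating set of rays of $\sigma$, read off through the matrix $L_\sigma$. The only things to watch are that the $l_\rho$ play a double role (as ray generators of $\sigma$ in $N_\R$ and as the rows of $L_\sigma$, i.e.\ functionals on $M$), and that no saturation subtlety enters, since both orders are defined by the same half-space inequalities and integrality is inherited from $u \in M$.
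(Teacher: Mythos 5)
Your proof is correct and follows essentially the same route as the paper's: reduce by linearity to $u = m'-m$, translate $L_\sigma(u)\in\N^{\sigma(1)}$ into $l_\rho(u)\geq 0$ for all $\rho\in\sigma(1)$, and identify that with $u\in\check\sigma\cap M=\sigma_M$. You merely spell out the final equivalence (that $\check\sigma$ is cut out by the inequalities attached to the ray generators of $\sigma$), which the paper treats as immediate.
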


\begin{proof}
We observe $L_\sigma(m) \leq L_\sigma(m') \Leftrightarrow L_\sigma(m') - L_\sigma(m) \in \N^n
\Leftrightarrow l_\rho(m' - m) \geq 0$ for every $\rho \in \sigma(1)$ $\Leftrightarrow m' - m \in
\sigma_M$.
\end{proof}

So, with respect to a fixed cone $\sigma$, it is natural to write $m \leq m'$ instead
of $L_\sigma(m) \leq L_\sigma(m')$, i.e., $m \leq m'$ if and only if
$m \leq_\sigma m'$. Moreover, for every $\uc \in \Z^n$
there exists some $m \in M$ such that $\uc \leq m$. To see this, we observe that we
always can choose some $m \in \sigma_M$ with $l_\rho(m) > 0$ for every $\rho \in \sigma(1)$ and
some integer $r > 0$ such that $\uc \leq r \cdot m$. So, for every $\uc \in \Z^{\sigma(1)}$
we obtain a nonempty open subset $U_\uc$ of $M$ which is given as
\begin{equation*}
U_\uc = \bigcup_{\uc \leq m} U(m)
\end{equation*}
By Proposition \ref{repsheafequiv}, every $M$-graded
module $E$ is equivalent to a sheaf of $\K$-vector spaces on $M$ which assigns
to every open subset $U$ of $M$ the vector space $E(U) = \ilim E_m$, where the limit is taken over
all $m \in U$. We use this to define a representation $\widehat{E}$ of $(\Z^{\sigma(1)}, \leq)$ by
setting
\begin{equation*}
\overline{E}_\uc := E(U_\uc).
\end{equation*}
By the functoriality of sheaves we have restriction maps $\overline{E}_\uc \rightarrow
\overline{E}_{\uc'}$
whenever $\uc \leq \uc'$. Hence we obtain a functor from $(\Z^{\sigma(1)}, \leq)$ to the
category
of $\K$-vector spaces and thus a $\Z^{\sigma(1)}$-graded $S$-module
$\overline{E} := \bigoplus_{\uc \in \Z^{\sigma(1)}} \overline{E}_\uc$ by Proposition \ref{repsheafequiv}.
Clearly this construction is functorial.

\begin{proposition}\label{liftingandlimes}
Denote $\widehat{E} \cong \bigoplus_{\uc \in \Z^{\sigma(1)}} \widehat{E}_\uc$ the $\Z^{\sigma(1)}$-graded
lifting of the sheaf over $U_\sigma$ associated to $E$
in the sense of Definition \ref{liftdef}. Then the modules
$\widehat{E}$ and $\overline{E}$ are naturally isomorphic. In particular,
$\widehat{E}_\uc \cong \Hom^M_\ksm(S_{(\uc)}, E)$
is naturally isomorphic to $\overline{E}_\uc$ for every $\uc \in \Z^{\sigma(1)}$.
\end{proposition}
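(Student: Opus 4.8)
The plan is to establish, for each fixed $\uc \in \Z^{\sigma(1)}$, a natural isomorphism $\widehat{E}_\uc \cong \overline{E}_\uc$, and then to check that these are compatible with the $\Z^{\sigma(1)}$-graded $S$-module structures so that they assemble into a natural isomorphism $\widehat{E} \cong \overline{E}$. By Remark \ref{liftingdef} applied to the affine variety $U_\sigma$, together with the sign convention \ref{signconvention}, the component of the lifting in degree $\uc$ is $\widehat{E}_\uc = \Hom^M_\ksm(S_{(-\uc)}, E)$, where $S_{(-\uc)}$ is the $M$-graded $X(H)$-homogeneous component of $S$. Thus the whole statement reduces to computing this graded $\Hom$ in terms of the sheaf $U \mapsto \ilim_{m \in U} E_m$ on $M$.

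First I would make $S_{(-\uc)}$ explicit as a representation of $(M, \leq_\sigma)$. Since $S = \K[\N^{\sigma(1)}]$, each $\Z^{\sigma(1)}$-graded piece of $S$ is either $0$ or the one-dimensional span of a single monomial, and multiplication by a monomial is an isomorphism between any two nonzero pieces it connects. Reading off the $M$-grading $(S_{(-\uc)})_\eta = S_{-\uc + L_\sigma(\eta)}$ shows that $(S_{(-\uc)})_\eta = \K$ precisely when $\uc \leq L_\sigma(\eta)$, i.e. when $\eta \in U_\uc$, and that all transition maps within this support are identities. In other words $S_{(-\uc)}$ is the \emph{indicator representation} $P_{U_\uc}$: the functor on $(M, \leq_\sigma)$ equal to $\K$ with identity maps on $U_\uc$ and $0$ elsewhere. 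That this is a well-defined representation uses precisely that $U_\uc$ is open, i.e. upward closed for $\leq_\sigma$.

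The core step is the representability identity $\Hom^M_\ksm(P_U, E) \cong E(U) = \ilim_{m \in U} E_m$, natural in $E$, for any open $U$. This is immediate once $S_{(-\uc)}$ has been identified with $P_{U_\uc}$: under the equivalence of Proposition \ref{repsheafequiv} a degree-preserving homomorphism $P_U \to E$ is a natural transformation of functors, hence a family $(v_m)_{m \in U}$ with $v_m \in E_m$ satisfying $v_{m'} = \rho_{m m'}(v_m)$ whenever $m \leq_\sigma m'$, where $\rho_{m m'}$ is the structure map of $E$; this is exactly a compatible system, i.e. an element of $\ilim_{m \in U} E_m$, which by Proposition \ref{repsheafequiv} is the value $E(U)$ of the associated sheaf. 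Specializing to $U = U_\uc$ gives $\widehat{E}_\uc \cong E(U_\uc) = \overline{E}_\uc$, and naturality in $E$ is built into the construction.

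It remains to see that these degreewise isomorphisms respect the $\Z^{\sigma(1)}$-graded $S$-module structures. For $\uc \leq \uc'$ we have $U_{\uc'} \subseteq U_\uc$, and I would check that on both sides the transition from degree $\uc$ to degree $\uc'$ is the restriction of sections $E(U_\uc) \rightarrow E(U_{\uc'})$: on the $\overline{E}$ side this restriction is exactly the map defining the $S$-module $\overline{E}$, while on the $\widehat{E}$ side multiplication by the monomial $x^{\uc' - \uc} \in S_{\uc' - \uc}$ acts by precomposition with the map $\,\cdot\, x^{\uc'-\uc} \colon S_{(-\uc')} = P_{U_{\uc'}} \hookrightarrow P_{U_\uc} = S_{(-\uc)}$, which again restricts a compatible family to the smaller open set $U_{\uc'}$. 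Since the genuine content --- the identification $\Hom^M_\ksm(P_U, E) \cong E(U)$ --- is a formal consequence of Proposition \ref{repsheafequiv}, the main thing to be careful about is the bookkeeping: verifying the explicit form of $S_{(-\uc)}$ as an indicator representation and keeping the grading and sign conventions of \ref{signconvention} consistent throughout, which is where a sign error would most easily slip in.
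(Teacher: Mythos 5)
Your proposal is correct and follows essentially the same route as the paper: both reduce the claim to identifying $\Hom^M_\ksm(S_{(\uc)}, E)$ with the limit of $E_m$ over the support of $S_{(\uc)}$, the only difference being that you phrase $S_{(-\uc)}$ as the indicator representation of the open set $U_\uc$ and use representability, while the paper picks a minimal generating set and verifies the universal property of the limit by hand. Your handling of the sign convention from \ref{signconvention} and the check of compatibility with the $S$-module structure are both consistent with (and slightly more explicit than) the paper's argument.
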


\begin{proof}
We write $\uc = \big(c_\rho \mid \rho \in \sigma(1)\big)$.
We can consider $S_{(\uc)}$ as an $M$-graded $\ksm$-submodule of the group ring
$\K[M]$ with $S_{(\uc)} \cong \bigoplus_m \K \chi(m)$, where the sum is taken over all
$m \in M$ with $l_\rho(m) \geq -c_\rho$. Choose a minimal set of generators $s_1, \dots, s_t$
of $S_{(\uc)}$ with degrees $m_1, \dots, m_t$. Then any $M$-homogeneous homomorphism
is determined by the images of the $s_i$ in the homogeneous components $E_{m_i}$. Hence,
we can identify $\Hom^M_\ksm(S_{(\uc)}, E)$
in a natural way with a subvector space of $\bigoplus_{i = 1}^t E_{m_i}$
consisting of tuples $(e_1, \dots, e_t)$ such that $\chi(m - m_i) e_i = \chi(m - m_j) e_j$
whenever $m_i, m_j \leq_\sigma m$. But this vector space has the universal properties of the
limit $\ilim E_m$ and thus we can naturally identify it with $\widehat{E}_\uc = \ilim E_m$.
The isomorphism of the modules $\widehat{E}$ and $\overline{E}$ then follows from the
naturality of this identification.
\end{proof}

\begin{remark}
By Theorem \ref{adjointness} the lifting functor is left-exact and to any toric sheaf
$\sh{E}$ we can consider its right derived modules
\begin{equation*}
\widehat{\sh{E}} = \widehat{\sh{E}}^{(0)}, \widehat{\sh{E}}^{(1)}, \dots
\end{equation*}
By Proposition \ref{liftingandlimes} we have now a very nice interpretation of these
modules, as we can identify them degree-wise with the right derived functors of
the limit functor $\ilim$.
Right derived
limit functors $\ilim^i$ have been pioneered by Roos \cite{Roos61} and have since
been studied extensively. Roos also gives a combinatorial analog of the Cech complex
which allows in simple cases the explicit computation of the derived functors.
We can now understand the left-exactness of the lifting functor combinatorially
by the fact that the posets $\{m \in M \mid l_\rho(m) \geq -c_\rho$ for all $\rho \in
\sigma(1)\}$ are not filtered,  i.e., for any
$m, m' \in U_\uc$ there may not exist any $m'' \in U_\uc$ with $m''
\leq_\sigma m$ and $m'' \leq_\sigma m'$, which otherwise would imply the exactness of the
limit functor (see \cite[Cor. 7.2]{Jensen72}).
\end{remark}

The following example shows both that lifting in general does not preserve exactness,
and the existence of nontrivial right derived modules $\widehat{E}^{(i)}$.

\begin{example}\label{klifting}\label{kliftingcont}
Let $\sigma \subset N_\R \cong \R^3$ the the cone generated by the
primitive vectors $l_1 = (1, 0, 0)$, $l_2 = (0, 1, 0)$, $l_3 = (-1, 1, 1)$,
$l_4 = (0, 0, 1)$. Denote $\mathfrak{m} \subset \ksm$ the maximal homogeneous
ideal and consider $\K = \ksm / \mathfrak{m}$ as a simple module in degree $0$.
Now for a given $\uc = (c_1, c_2, c_3, c_4) \in \Z^4$, it is straightforward to
see that $0 \in M$ is a minimal element in $\{m \in M \mid l_i(m) \geq -c_i\}$ if and
only if
\begin{equation*}
c_1, c_3 \leq 0, c_2 = c_4 = 0 \quad \text{ or } \quad
c_1 = c_3 = 0, c_2, c_4 \leq 0.
\end{equation*}
If $\uc$ satisfies one of these conditions, then $\widehat{\K}_\uc \cong \K$ and
$\widehat{\K}_\uc = 0$ otherwise. So there is no lower bound for the $c_i$ such
that $\widehat{\K}_\uc$ vanishes and so $\widehat{\K}$ cannot be finitely generated.
We observe that $\widehat{\K}$ is Artinian and is supported precisely on those
torus orbits which get contracted to the fixed point under the quotient map
$\mathbf{A}^4_\K \rightarrow U_\sigma$.

Moreover, by Lemma \ref{divisorlift}, we have $\widehat{\ksm} \cong S$
and the long exact derived sequence of $0 \rightarrow \mathfrak{m} \rightarrow \ksm
\rightarrow \K \rightarrow 0$ starts by:
\begin{equation*}
0 \longrightarrow \widehat{\mathfrak{m}} \longrightarrow S \longrightarrow \widehat{\K}
\longrightarrow \widehat{\mathfrak{m}}^{(1)}.
\end{equation*}
By degree-wise inspection one can see that $\widehat{\mathfrak{m}} = (x_1, x_2, x_3, x_4)$,
and therefore $\widehat{\mathfrak{m}}^{(1)}$ cannot be finitely generated as well.
\end{example}

By adjointness, the lifting functor transports injective $M$-graded $\ksm$-modules to injective
$\Z^{\sigma(1)}$-graded $S$-modules.
In \cite{perling7}, codivisorial modules have been considered. For given $\uc \in
\Z^{\sigma(1)}$, such a module can be defined as $\K[-M^{\uc, I}] = \bigoplus_{m \in
-M^{\uc, I}} \K \chi(m)$, where $I$ is any subset of $\sigma(1)$ and $M^{\uc, I} =
\{m \in M \mid l_\rho(m) \geq c_\rho $ for $\rho \in I \}$. If $\uc =
L_\sigma(m)$ for some $m \in M$, then
$\K[-M^{\uc, I}]$ is an injective object in $\mksmMod$.
However, if $\K[-M^{\uc, I}]$ is not injective, the
following example shows that lifting can exhibit a more bizarre behavior than in the
previous example.

\begin{example}\label{liftex}
Let $\ksm$ be as in Example \ref{klifting} and consider the module $\K[-M^{\uc, I}]$
with $\uc = 0$ and $I = \{1, 3\}$. A similar computation as in Example
\ref{klifting} shows that
$\widehat{\K[-M^{\uc, I}]}_{(c_1, 0, c_3, 0)}
\cong \K^{1 -c_1 - c_3}$ whenever $c_1 + c_3 \leq 0$. So this module exhibits an
infinite family of graded components of any finite dimension. This shows that the
lifting functor does not respect combinatorial finiteness in the sense of \cite{perling7}.
\end{example}

\begin{sub}
Rather than limits, we can also consider colimits associated to representations of
$(M, \leq_\sigma)$. That is, for any $M$-graded $\ksm$-module $E$, there is its
colimit $\colim E_m$. As the preordered set $(M, \leq_\sigma)$ is filtered, forming
the colimit is exact. Given an $M$-graded $\ksm$-module $E \cong \bigoplus_{m \in M} E_m$,
we can associate to it the colimit $\mathbf{E} := \colim E_m$. Similarly, for the lifted
$S$-module $\widehat{E}$ we have the colimit $\widehat{\mathbf{E}} := \colim \widehat{E}_\uc$,
which is formed over the poset $(\Z^{\sigma(1)}, \leq)$.
\end{sub}

\begin{proposition}\label{EEpropiv}
In the above situation we have $\E = \widehat{\E}$.
\end{proposition}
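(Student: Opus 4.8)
The plan is to reduce the statement to the cofinality computation underlying the construction of $\overline{E}$ and to exploit that each $U(m)$ has a least element. By Proposition \ref{liftingandlimes} I may identify $\widehat{E}_\uc$ with $\overline{E}_\uc = E(U_\uc) = \ilim_{m \in U_\uc} E_m$ naturally in $\uc$, so that the transition maps $\widehat{E}_\uc \to \widehat{E}_{\uc'}$ for $\uc \leq \uc'$ become the sheaf restriction maps $E(U_\uc) \to E(U_{\uc'})$ (note $U_{\uc'} \subseteq U_\uc$). Thus
\begin{equation*}
\widehat{\E} = \colim_{\uc \in \Z^{\sigma(1)}} \widehat{E}_\uc \cong \colim_{\uc \in \Z^{\sigma(1)}} E(U_\uc),
\end{equation*}
and it remains to identify this filtered colimit with $\E = \colim_{m \in M} E_m$.

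First I would evaluate the system along the elements $\uc = L_\sigma(m_0)$. From the definition $U_\uc = \bigcup_{\uc \leq L_\sigma(m)} U(m)$ one checks $U_{L_\sigma(m_0)} = U(m_0)$, since $U(m) \subseteq U(m_0)$ whenever $m_0 \leq_\sigma m$ and the term $m = m_0$ already contributes $U(m_0)$. Now $m_0$ is the least element of $(U(m_0), \leq_\sigma)$, hence an initial object of the indexing category of the limit $E(U(m_0)) = \ilim_{m \in U(m_0)} E_m$; a limit over a diagram with an initial object equals its value there, so $\widehat{E}_{L_\sigma(m_0)} \cong E(U(m_0)) \cong E_{m_0}$. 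Moreover, for $m_0 \leq_\sigma m_1$ the restriction map $E(U(m_0)) \to E(U(m_1))$ is exactly the structure map $E_{m_0} \to E_{m_1}$ of the functor $E$ under these identifications, because a compatible family over $U(m_0)$ has its $E_{m_1}$-component equal to the image of its $E_{m_0}$-component.

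Finally I would invoke cofinality. As recorded before Proposition \ref{liftingandlimes}, for every $\uc \in \Z^{\sigma(1)}$ there is some $m_0 \in M$ with $\uc \leq L_\sigma(m_0)$, so the subfamily $\{L_\sigma(m_0)\}_{m_0 \in M}$ is cofinal in the directed set $(\Z^{\sigma(1)}, \leq)$; it is directed because $(M, \leq_\sigma)$ is filtered and $L_\sigma$ is monotone. Since a filtered colimit is computed over any cofinal subdiagram, the previous paragraph yields
\begin{equation*}
\widehat{\E} \cong \colim_{\uc \in \Z^{\sigma(1)}} E(U_\uc) \cong \colim_{m_0 \in M} E(U(m_0)) \cong \colim_{m_0 \in M} E_{m_0} = \E.
\end{equation*}
The step that needs the most care is this cofinality passage together with the verification that the transition maps of the two systems agree under $\widehat{E}_{L_\sigma(m_0)} \cong E_{m_0}$; when $\dim \sigma < d$ the preorder $\leq_\sigma$ is not antisymmetric and $L_\sigma$ is not injective, but this causes no problem, since the colimit only depends on the associated poset and the fibres of $L_\sigma$ consist of $\leq_\sigma$-equivalent elements.
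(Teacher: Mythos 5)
Your argument is correct and takes essentially the same route as the paper's, which simply notes that for every $\uc \in \Z^{\sigma(1)}$ there is $m \in M$ with $\uc \leq L_\sigma(m)$ and conversely, so the two directed systems are mutually cofinal and the colimits agree. Your write-up merely makes explicit the details the paper elides, namely that $U_{L_\sigma(m_0)} = U(m_0)$, that $\widehat{E}_{L_\sigma(m_0)} \cong E_{m_0}$ because $m_0$ is initial in $U(m_0)$, and that the transition maps match.
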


\begin{proof}
It is easy to see that for every $\uc \in \Z^{\sigma(1)}$ we can find some $m \in M$
such that $\uc \leq m$. Conversely, for every $m \in M$ we can find some $\uc \in \Z^{\sigma(1)}$
such that $m \leq \uc$. It follows that $\colim E_m$ and $\colim \widehat{E}_\uc$ are cofinal.
\end{proof}

If $\dim \sigma < d$, then we have $m \leq_\sigma m'$ and $m' \leq_\sigma m$ whenever
$m' - m \in \sigma_M^\bot$. In particular, such a pair $(m, m')$ is an isomorphism in
the {\em category} $M$. The following proposition states that, up to natural equivalence,
we do not loose anything essential if we pass from the preordered set $(M, \leq_\sigma)$
to $M / \sigma_M^\bot$ with the induced partial order:

\begin{proposition}[{\cite[Prop. 2.8]{perling7}}]\label{orthoequiv}
Let $\Lambda \subseteq \sigma_M$ be a subgroup. Then there is an equivalence of categories
between the category of $M$-graded $\ksm$-modules
and the category of $M / \Lambda$-graded $\K[\sigma_M / \Lambda]$-modules.
\end{proposition}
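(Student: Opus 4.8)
The plan is to reduce the statement to a purely combinatorial equivalence of indexing categories and then invoke Proposition \ref{repsheafequiv} on both sides. By that proposition (equivalence of (ii) and (iii)), the category of $M$-graded $\ksm$-modules is equivalent to the functor category $\mathrm{Fun}\big((M, \leq_\sigma), \kvect\big)$; applying the same proposition to the affine monoid $\sigma_M/\Lambda \subseteq M/\Lambda$ identifies the category of $M/\Lambda$-graded $\K[\sigma_M/\Lambda]$-modules with $\mathrm{Fun}\big((M/\Lambda, \leq), \kvect\big)$, where $\bar m \leq \bar m'$ if and only if $\bar m' - \bar m$ lies in the image $\sigma_M/\Lambda$ of $\sigma_M$. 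Since precomposition with an equivalence of categories induces an equivalence of the corresponding functor categories, it suffices to prove that the quotient map $q\colon (M, \leq_\sigma) \to (M/\Lambda, \leq)$, viewed as a functor of (thin) preorder categories, is an equivalence.

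First I would check that $q$ is essentially surjective and faithful; both are immediate, since $q$ is surjective on objects and between any two objects of a preorder category there is at most one morphism. The only real point is fullness: given $m, m' \in M$ with $q(m) \leq q(m')$, I must produce the morphism $m \leq_\sigma m'$ in the source. By definition of $\leq$ there exist $s \in \sigma_M$ and $\lambda \in \Lambda$ with $m' - m = s + \lambda$; since $\Lambda \subseteq \sigma_M$ by hypothesis and $\sigma_M$ is closed under addition, we obtain $m' - m = s + \lambda \in \sigma_M$, that is, $m \leq_\sigma m'$. Hence $q$ is an equivalence, and transporting through Proposition \ref{repsheafequiv} yields the desired equivalence of module categories.

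The fullness computation is the conceptual heart but is short; the step requiring the most care is the second application of Proposition \ref{repsheafequiv}. I would need to confirm that the representation-theoretic equivalence there is robust enough to cover the quotient datum $(\sigma_M/\Lambda, M/\Lambda)$: the group $M/\Lambda$ may acquire torsion when $\Lambda$ is not saturated, so that $\sigma_M/\Lambda$ is a finitely generated monoid inside a finitely generated abelian group rather than inside a lattice. The equivalence between graded modules and functors on the associated preorder is, however, formal in the grading group, so I expect it to extend verbatim; making this precise — and verifying that the induced preorder on $M/\Lambda$ is exactly the one coming from $\sigma_M/\Lambda$ — is where the bulk of the verification lies. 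A more hands-on route avoiding this point is to observe that for $\lambda \in \Lambda$ both $\chi(\lambda)$ and $\chi(-\lambda)$ lie in $\ksm$, so multiplication by $\chi(\lambda)$ is an isomorphism $E_m \xrightarrow{\sim} E_{m + \lambda}$ for every $M$-graded module $E$; collapsing each $\Lambda$-coset of graded pieces along these canonical isomorphisms produces the $M/\Lambda$-graded $\K[\sigma_M/\Lambda]$-module directly, and one checks this assignment is an equivalence by exhibiting the evident quasi-inverse.
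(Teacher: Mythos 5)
Your argument is correct, but there is nothing in the paper to compare it against: Proposition \ref{orthoequiv} is imported from \cite[Prop.~2.8]{perling7} without proof, the paper only remarking that it is stated here in slightly greater generality (an arbitrary subgroup $\Lambda \subseteq \sigma_M$ rather than, say, $\Lambda = \sigma_M^\bot$). Your proof supplies exactly the justification that remark leaves implicit. The reduction to functor categories over the preorders $(M, \leq_\sigma)$ and $(M/\Lambda, \leq)$ is legitimate: the dictionary between $A$-graded $\K[P]$-modules and representations of $(A, \leq_P)$ uses only that $P$ is a submonoid of the grading group $A$, so that the multiplication map $E_a \rightarrow E_{a+p}$ is determined by the pair $(a, a+p)$; this is insensitive to torsion in $M/\Lambda$, as you suspect, and the image of $\sigma_M$ in $M/\Lambda$ is indeed the quotient monoid $\sigma_M/\Lambda$ because $\Lambda$ is a group contained in $\sigma_M$. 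Your fullness computation correctly isolates the one place the hypothesis $\Lambda \subseteq \sigma_M$ is used (closure of $\sigma_M$ under addition). The second, hands-on route is equally valid and arguably preferable here: since $\Lambda = -\Lambda \subseteq \sigma_M$, each $\chi(\lambda)$ acts invertibly on any $M$-graded $\ksm$-module, so collapsing the $\Lambda$-cosets of graded components along these canonical isomorphisms gives the equivalence directly and avoids having to extend Proposition \ref{repsheafequiv} beyond the lattice-graded setting in which the paper states it.
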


Note that we state Proposition \ref{orthoequiv} in slightly greater generality than \cite{perling7}.

\begin{sub}
Now, we are ready to consider the non-affine case. Denote $\{U_\sigma\}_{\sigma \in \Delta}$
the standard covering of $X$ and $\{\hat{U}_\sigma = \spec{S_\sigma}\}_{\sigma \in \Delta}$
the corresponding cover of $\hat{X}$ given by the preimages of the $U_\sigma$.
If we take a $T$-equivariant, i.e., toric sheaf $\sh{E}$ on $X$, we see by Proposition \ref{orthoequiv}
that the $S_\sigma$-modules $\Gamma(\hat{U}_\sigma, \sh{E}^{\hat{T}})$ are naturally equivalent
to the lifts of $\Gamma(U_\sigma, \sh{E})$ to $\K[\N^{\sigma(1)}]$. In particular, it is
straightforward to check that coherence, torsion-freeness, and reflexivity are preserved
by passing back and forth between $\K[\N^{\sigma(1)}]$ and $S_\sigma$.
\end{sub}

\begin{sub}
Given a quasi-coherent sheaf $\sh{E}$ on $X$, we obtain a family of colimits
$\mathbf{E}^\sigma := \colim \Gamma(U_\sigma, \sh{E})_m$ for $\sigma
\in \Delta$. For every pair of faces $\tau, \sigma$ such that $\tau$ is a face of
$\sigma$, the restriction $\Gamma(U_\sigma, \sh{E}) \rightarrow \Gamma(U_\tau, \sh{E})$
induces a map of directed families over $(M, \leq_\sigma)$ and $(m, \leq_\tau)$,
respectively, and by the universal property of colimits we obtain an induced $\K$-linear
isomorphism $\mathbf{E}^\sigma \rightarrow
\mathbf{E}^\tau$ (see \cite[\S 5.4]{perling1}). As the face poset of $\Delta$ has
the zero cone $0$ as the unique minimal element, we can use the isomorphisms
$\mathbf{E}^\sigma \rightarrow \mathbf{E}^0$ to identify the $\mathbf{E}^\sigma$
with $\mathbf{E}^0 =: \mathbf{E}$.
For the case that $\sh{E}$ is coherent, it has been shown in \cite[\S 5.4]{perling1}
that $\dim \mathbf{E}$ equals the rank of $\sh{E}$.
We can do the same construction for
$\widehat{\sh{E}}$ and obtain a colimit $\widehat{\mathbf{E}}$, which, using Proposition
\ref{EEpropiv}, we can in a natural way identify with $\mathbf{E}$.
\end{sub}

\begin{sub}
This construction becomes most interesting for the case that $\sh{E}$ (and thus
$\widehat{\sh{E}}$ by Theorem \ref{coherencetheorem1}) is finitely generated and torsion-free.
Then the maps $\Gamma(U_\sigma, \sh{E})_M \xrightarrow{\cdot \chi(m')}
\Gamma(U_\sigma, \sh{E})_{m + m'}$ are injective for every $\sigma \in \Delta$,
$m \in M$, and $m' \in \sigma_M$. It follows that the induced maps
$\Gamma(U_\sigma, \sh{E})_m \rightarrow \mathbf{E}$ are injective as well for
every $\sigma \in \Delta$ and $m \in M$, and
analogously so for the induced maps $\widehat{\sh{E}}_\uc \rightarrow \mathbf{E}$
for $\uc \in \Z^{\Delta(1)}$. This allows a greatly condensed representation of torsion free
toric sheaves in terms of families of subvector spaces of a fixed vector space $\mathbf{E}$
which are parameterized by the family of posets $\{(M, \leq_\sigma)\}_{\sigma \in \Delta}$
(see \cite[Theorem 5.18]{perling1}).
\end{sub}

For the case of reflexive sheaves we have the following structural theorem due to
Klyachko.

\begin{theorem}[\cite{Kly90}, \cite{Kly91}, see also \cite{perling1}]\label{klythm}
The category of coherent reflexive toric sheaves on a toric variety $X$ is equivalent
to the category of vector spaces $\E$ endowed with filtrations
$0 \subseteq \cdots \subseteq E^\rho(i) \subseteq E^\rho(i + 1) \subseteq \cdots
\subseteq \E$ for $\rho \in \Delta(1)$ which are full in the sense that
$E^\rho(i) = 0$ for $i << 0$ and $E^\rho(i) = \E$ for $i >> 0$.
\end{theorem}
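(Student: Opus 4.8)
The plan is to exhibit a pair of mutually inverse functors between the two categories, building on the affine-local dictionary of Proposition \ref{repsheafequiv}, the reduction to a full-dimensional order in Proposition \ref{orthoequiv}, and the fact recorded above that for a finitely generated torsion-free toric sheaf all the structure maps $\Gamma(U_\sigma, \sh{E})_m \to \E$ are injective. First I would construct the functor from reflexive sheaves to filtered vector spaces. Given a coherent reflexive toric sheaf $\sh{E}$, form the colimit $\E = \colim \Gamma(U_\sigma, \sh{E})_m$, which is independent of $\sigma$ and has dimension equal to the rank of $\sh{E}$. For each ray $\rho \in \Delta(1)$ restrict to the one-dimensional cone $\sigma = \rho$ and apply Proposition \ref{orthoequiv} with $\Lambda = \rho_M^\perp$: since $M/\rho_M^\perp \cong \Z$ with the order induced by $l_\rho$, the representation $\Gamma(U_\rho, \sh{E})$ collapses to a functor from $(\Z, \leq)$ to $\K$-vector spaces. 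Torsion-freeness makes every transition map injective, so this functor is exactly an increasing filtration of subspaces of $\E$; I set $E^\rho(i)$ to be the image of $\Gamma(U_\rho, \sh{E})_m$ in $\E$ for any $m$ with $l_\rho(m) = i$, which is well defined by Proposition \ref{orthoequiv}. Coherence forces finite generation, hence each $\Gamma(U_\rho, \sh{E})$ is bounded below and exhausts $\E$, so the filtrations are full: $E^\rho(i) = 0$ for $i \ll 0$ and $E^\rho(i) = \E$ for $i \gg 0$.

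Conversely, given a vector space $\E$ with full filtrations $\{E^\rho\}_{\rho \in \Delta(1)}$, I would reconstruct the sheaf locally by defining, for each $\sigma \in \Delta$, the $M$-graded $\ksm$-module with homogeneous components
\begin{equation*}
\Gamma(U_\sigma, \sh{E})_m := \bigcap_{\rho \in \sigma(1)} E^\rho\big(l_\rho(m)\big) \subseteq \E,
\end{equation*}
where multiplication by $\chi(m')$ for $m' \in \sigma_M$ is the inclusion dictated by the filtrations (these are genuine inclusions because $l_\rho(m') \geq 0$). Fullness guarantees this module is finitely generated. These modules glue over the face poset of $\Delta$ because the intersection formula is manifestly compatible with passing to a face $\tau$ of $\sigma$, which merely drops the rays in $\sigma(1) \setminus \tau(1)$ from the intersection; by Proposition \ref{repsheafequiv} they therefore determine a toric sheaf. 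Verifying that the two assignments are mutually inverse on objects, and that they are compatible with morphisms (a linear map $\E \to \E'$ respecting all filtrations corresponds to a morphism of the glued sheaves), is then routine filtration bookkeeping.

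The main obstacle is to show that the intersection formula genuinely produces a reflexive sheaf and, in the other direction, that every coherent reflexive toric sheaf satisfies it, i.e. that $\Gamma(U_\sigma, \sh{E})_m = \bigcap_{\rho \in \sigma(1)} E^\rho(l_\rho(m))$ inside $\E$. This is the combinatorial shadow of the $S_2$/Hartogs property of reflexive modules. Since $\check{\sigma} = \bigcap_{\rho \in \sigma(1)} \check{\rho}$, the union $\bigcup_{\rho \in \sigma(1)} U_\rho$ contains the torus together with all codimension-one orbits $\orb{\rho}$, and its complement in $U_\sigma$ has codimension at least two; a coherent sheaf is reflexive precisely when its sections extend across this complement, which translates degree-wise into closedness under the above intersection of the ray data taken inside the generic fibre $\E \otimes_\K \K[M]$. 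Pinning down this equivalence between reflexivity on the sheaf side and the intersection property on the combinatorial side, via Hartshorne's local characterization of reflexive modules \cite[\S 1]{Hart1}, is where the real content lies; once it is established, the construction of the two functors and the check that they are inverse equivalences is formal.
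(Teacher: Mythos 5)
The paper does not prove this theorem; it is imported from Klyachko and \cite{perling1}, and the only ingredient of it the paper actually uses is the intersection formula recalled in \ref{filtreconst}. Your sketch follows that same standard route --- ray-wise filtrations extracted via Proposition \ref{orthoequiv} and torsion-freeness, the intersection formula from the codimension-two extension property of reflexive sheaves on a normal variety via \cite[\S 1]{Hart1} --- and is correct in outline.
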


\begin{sub}\label{filtreconst}
Over $U_\sigma$, we observe that for a torsion free $\ksm$-module $E$ we have
$\ilim E_m$ equals the intersection
$\bigcap_{m \leq_\sigma m'} E_{m'}$ in $ \mathbf{E}$.
Therefore, given $\mathbf{E}$ and $E^\rho(i)$ for $\rho \in \sigma(1)$ as in
Theorem \ref{klythm}, one one constructs a reflexive module $E$ from this data by
setting $E = \bigoplus_{m \in M} E_m$ and $E_m = \bigcap_{\rho \in \sigma(1)}
E^\rho\big(l_\rho(m)\big) \subseteq \mathbf{E}$.
\end{sub}

By Theorem \ref{coherencetheorem1} we know that for a reflexive toric sheaf
$\sh{E}$ on $X$, its lifting $\widehat{\sh{E}}$ is reflexive as well. The fan $\widehat{\Delta}$
associated to $\hat{X}$ in general contains more cones than $\Delta$, but
we have a one-to-one correspondence between $\Delta(1)$ and $\hat{\Delta}(1)$
given by, say, $\rho \mapsto \hat{\rho}$. So we know a priori that $\sh{E}$ and
$\widehat{\sh{E}}$ are described by the same number of filtrations. The following
result shows that these filtrations (in an almost tautological sense) indeed coincide
and, moreover, that lifting is indeed ``the'' correct functor to translate reflexive
toric sheaves into $\Z^{\Delta(1)}$-graded $S$-modules.

\begin{theorem}
A toric sheaf $\sh{E}$ is coherent and reflexive if and only if $\widehat{\sh{E}}$ is coherent
and reflexive. Moreover, if $\sh{E}$ and $\widehat{\sh{E}}$ are reflexive, then they
are canonically described by the same data, i.e.,
$\widehat{\E} = \E$ and $\widehat{E}^{\hat{\rho}}(i) = E^\rho(i)$ for any $\rho \in \Delta(1)$.
In particular, lifting induces an equivalence of
categories between the category of reflexive toric sheaves on $X$, the category
of reflexive toric sheaves on $\hat{X}$, and the category of reflexive $\Z^{\Delta(1)}$-graded
$S$-modules.
\end{theorem}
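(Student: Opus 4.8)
The plan is to prove the three-part statement by combining the reflexivity transfer from Theorem~\ref{coherencetheorem1} with the limit interpretation of lifting from Proposition~\ref{liftingandlimes}, and then showing the Klyachko data is preserved cone by cone. First I would establish the equivalence ``$\sh{E}$ reflexive $\Leftrightarrow$ $\widehat{\sh{E}}$ reflexive''. The forward direction is exactly Theorem~\ref{coherencetheorem1}~(\ref{coherencetheorem1ii}). For the converse, I would use that sheafification is left-inverse to lifting (Proposition~\ref{leftinverse}), so $\sh{E} \cong (\widehat{\sh{E}})\tilde{\ }$; since reflexivity and coherence are local and are preserved under passing between $\K[\N^{\sigma(1)}]$ and $S_\sigma$ (as noted in the subsection preceding Theorem~\ref{klythm}), a reflexive $\widehat{\sh{E}}$ forces each local module $\Gamma(U_\sigma, \sh{E})$ to be reflexive, hence $\sh{E}$ is reflexive by \cite[Prop. 1.6]{Hart1}.

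Next I would pin down the identification $\widehat{\E} = \E$. Both sides are computed as colimits: $\E = \colim \Gamma(U_\sigma, \sh{E})_m$ over $(M, \leq_\sigma)$ and $\widehat{\E} = \colim \widehat{\sh{E}}_\uc$ over $(\Z^{\sigma(1)}, \leq)$, and Proposition~\ref{EEpropiv} (via cofinality) already gives $\E = \widehat{\E}$ on each cone, glued across $\Delta$ using the identifications $\mathbf{E}^\sigma \cong \mathbf{E}^0$. So this identification is essentially a reference to the machinery built just before the theorem.

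The heart of the argument is the filtration identity $\widehat{E}^{\hat\rho}(i) = E^\rho(i)$. Here I would work locally on the cone $\sigma$ and use the construction recalled in~\ref{filtreconst}: for a torsion-free module the Klyachko filtration step $E^\rho(i)$ is recovered as $\bigcap_{l_\rho(m) \geq i} E_m \subseteq \mathbf{E}$, the intersection being taken inside the common colimit. By Proposition~\ref{liftingandlimes}, $\widehat{\sh{E}}_\uc \cong \ilim E_m$ over $\{m \mid l_\rho(m) \geq -c_\rho \text{ for all } \rho\}$, and in the torsion-free case (by~\ref{filtreconst}) this limit is the intersection $\bigcap E_m$ inside $\mathbf{E}$. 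On $\hat{X}$ the fan is smooth and simplicial so the ray $\hat\rho$ carries the linear form dual to the $\rho$-th coordinate of $\Z^{\sigma(1)}$; restricting the multi-index $\uc$ to vary only in its $\rho$-component while sending the others to $+\infty$ (i.e. passing to the relevant cofinal subposet under the correspondence $\rho \leftrightarrow \hat\rho$) collapses the $\widehat{\sh{E}}$-filtration to exactly the same subspace of $\mathbf{E} = \widehat{\E}$ that defines $E^\rho(i)$. Making this collapse precise — that the limit defining the $\hat\rho$-filtration on the homogeneous-coordinate side agrees with the single-ray intersection on the $\K[\N^{\sigma(1)}]$ side — is the main obstacle, since one must check that the extra coordinates of $\Z^{\sigma(1)}$ contribute nothing once reflexivity (hence the intersection description) is in force.

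Finally, the equivalence-of-categories claim would follow formally. Lifting and sheafification are adjoint (Theorem~\ref{adjointness}) with sheafification left-inverse to lifting, and on reflexive objects both the vector space $\E$ and the filtrations are preserved by the previous steps; since Theorem~\ref{klythm} says reflexive toric sheaves are \emph{determined} by precisely this data, the lifting functor restricts to a fully faithful, essentially surjective functor between the category of reflexive toric sheaves on $X$ and the category of reflexive $\Z^{\Delta(1)}$-graded $S$-modules, with the middle category (reflexive toric sheaves on $\hat{X}$) identified via the standard dictionary between $S$-modules and sheaves on $\hat{X}$. I would close by remarking that naturality of all the identifications guarantees compatibility of morphisms, so the three categories are genuinely equivalent rather than merely in bijection on objects.
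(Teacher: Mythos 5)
Your strategy coincides with the paper's: reduce to the affine case $X=U_\sigma$, use Proposition \ref{liftingandlimes} to express $\widehat{E}_\uc$ as a limit, use \ref{filtreconst} to turn that limit into an intersection inside $\E$, and get the equivalence of categories formally from Theorem \ref{klythm} together with the observation that a filtration-preserving map of vector spaces preserves all intersections. The identification $\widehat{\E}=\E$ via Proposition \ref{EEpropiv} is also as in the paper. The genuine gap is precisely the step you yourself flag as ``the main obstacle'': you describe the collapse of the multi-index limit onto the single-ray filtration but do not prove it, and this verification is essentially the entire content of the paper's proof. It is closed by computing \emph{every} graded component at once (rather than only the special degrees that recover the filtrations, as you propose): for $\uc\in\Z^{\sigma(1)}$ one has
\[
\widehat{E}_\uc \;=\; \ilim E_m \;=\; \bigcap_{\uc\leq m} E_m \;=\; \bigcap_{\uc\leq m}\ \bigcap_{\rho\in\sigma(1)} E^\rho\big(l_\rho(m)\big)\;\subseteq\;\E .
\]
Since each filtration is increasing and $l_\rho(m)\geq c_\rho$ for all $m$ in the index set, every term contains $\bigcap_\rho E^\rho(c_\rho)$, giving one inclusion. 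For the other, because each $l_\rho$ is primitive one can choose, for every fixed $\rho$, some $m$ with $\uc\leq m$ and $l_\rho(m)=c_\rho$ \emph{exactly}, so the term $E^\rho(c_\rho)$ actually occurs in the intersection. Hence $\widehat{E}_\uc=\bigcap_{\rho\in\sigma(1)}E^\rho(c_\rho)=F_\uc$, where $F$ is the reflexive $S$-module built from $\mathbf{F}=\E$ and $F^{\hat\rho}(i)=E^\rho(i)$; the filtration identity, the reflexivity of $\widehat{E}$, and the essential surjectivity onto reflexive $\Z^{\sigma(1)}$-graded modules all follow at once. Note that what makes your ``send the other coordinates to infinity'' heuristic work is not cofinality but the \emph{attainability} of the exact value $c_\rho$; this is the only point where a property of the fan (primitivity of the $l_\rho$) enters, and your sketch never invokes it.

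A smaller issue: for the converse of the first assertion you appeal to reflexivity being preserved ``under passing between $\K[\N^{\sigma(1)}]$ and $S_\sigma$'', but that statement concerns the two presentations of $\sh{E}^H$ on $\hat{U}_\sigma$, not sheafification. What you actually need is that the degree-zero part $F_{(0)}$ of a reflexive $\Z^{\sigma(1)}$-graded module $F$ is reflexive, and this again drops out of the displayed computation: $F_{(0)}$ is the $M$-graded module with components $\bigcap_{\rho\in\sigma(1)} F^{\hat\rho}\big(l_\rho(m)\big)$, i.e.\ the reflexive module attached to the same filtrations, which is exactly how the paper obtains $E'=F_{(0)}=E$.
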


\begin{proof}
The statements on coherence and reflexivity follow from Theorem \ref{coherencetheorem1}.
It suffices to consider the case that $X$ is affine, i.e., $X = U_\sigma$.
So, assume that $E$ is a reflexive $M$-graded $\ksm$-module, given by filtrations
$E^\rho(i)$ of the vector space $\E$. From this data we can construct a reflexive
$\Z^{\sigma(1)}$-graded $S$-module $F$ by setting $\mathbf{F} = \E$ and $F^{\hat{\rho}}(i)
= E^\rho(i)$. Similarly, if we start with the
reflexive $S$-module $F$, we get a reflexive $\ksm$-module $E'$ by simply identifying
the filtrations. We show that $F \cong \widehat{E}$ and $E' = F_{(0)} = E$.

The equality $E' = F_{(0)} = E$ follows from the fact that $E_m = \bigcap_{\rho \in \sigma(1)}
E^\rho\big(l_\rho(m)\big) =$ $\bigcap_{\rho \in \sigma(1)}$ $F^{\hat{\rho}}\big(l_\rho(m)\big) = F_m$
(see \ref{filtreconst}), where in the
latter equation we identify $m$ with its image $L_\sigma(m) \in \Z^{\sigma(1)}$.
Now consider $\widehat{E}_\uc$ for some $\uc \in \Z^n$. By \ref{filtreconst} we have
$\widehat{E}_\uc = \underset{\leftarrow}{\lim} E_m =
\bigcap_{\uc \leq m} E_m = \bigcap_{\uc \leq m} \bigcap_{\rho \in \Delta(1)}
E^\rho\big(l_\rho(m)\big) \subseteq \E$. Now by the fact that
the $l_\rho$ are primitive elements in $N$, we can always choose for any $\rho \in
\Delta(1)$ some $m \in M$ such that $l_\rho(m) = c_\rho$. It follows that $\widehat{E}_\uc =
\bigcap_{\rho \in \Delta(1)} E^\rho(c_\rho) = F_\uc$.

For the equivalence of categories, it suffices to
remark that for any two reflexive toric sheaves $\sh{E}, \sh{F}$, there is a natural
bijection $\Hom(\sh{E}, \sh{F}) \rightarrow \Hom(\widehat{\sh{E}}, \widehat{\sh{F}})$, as any
homomorphism of vector spaces $\E \rightarrow \mathbf{F}$ which
respects the filtrations also respects any of their intersections.
\end{proof}

\begin{remark}
For $\sh{E}$ reflexive, one can easily show that the $S$-module
$\widehat{\sh{E}}$ is isomorphic to
$(\Gamma_* \sh{E})\check{\ }\check{\ }$, the reflexive hull of $\Gamma_* \sh{E}$.
Note that more generally, if $\sh{E}$ is torsion free, then $\widehat{\sh{E}}$
does not necessarily coincide with $\Gamma_* \sh{E}$ modulo torsion.
\end{remark}

\begin{remark}
In \cite{perling7}, reflexive $M$-graded $\ksm$-modules have been investigated
in terms of the vector space arrangements underlying the associated filtrations.
Given such a module $E$, it is not difficult to see that in general not all possible
intersections are realized as the graded components $\Gamma(U_\sigma, \sh{E})_m =
\bigcap_{\rho \in \sigma(1)} E^\rho\big(l_\rho(m)\big)$.
However, for the vector space arrangement underlying the filtrations associated to
$\widehat{\sh{E}}$, all possible intersections indeed are realized this way.
In this sense, on can consider vector space arrangement in $\E$ underlying the
filtrations associated to $\widehat{E}$
as the intersection completion of the vector space arrangement underlying the
filtrations associated to $E$.
\end{remark}

\end{document}